\def\newaliasedtheorem#1[#2]#3{
  \newaliascnt{#1@alt}{#2}
  \newtheorem{#1}[#1@alt]{#3}
  \expandafter\newcommand\csname #1@altname\endcsname{#3}
}
\theoremstyle{plain}
\newtheorem{theorem}{Theorem}[section]
\theoremstyle{remark}
\theoremstyle{definition}
\theoremstyle{remark}
\numberwithin{equation}{section}
\def\eps{\varepsilon}
\def\R{\mathbb R}
\def\N{{\mathbb N}}
\DeclareMathOperator*{\esssup}{ess\,sup}
\DeclareMathOperator{\dv}{div}
\DeclareMathOperator{\Lip}{Lip}
\DeclareMathOperator{\loc}{loc}
\title{On the 
commutativity of flows of rough vector fields}
\author[M. Colombo and  R. Tione]{Maria Colombo \and Riccardo Tione}
\address{Maria Colombo
\hfill\break EPFL B, Station 8, CH-1015 Lausanne, CH}
\email{maria.colombo@epfl.ch}
\address{Riccardo Tione  
\hfill\break  EPFL B, Station 8, CH-1015 Lausanne, CH}
\email{riccardo.tione@epfl.ch}
\begin{document}

\maketitle

\begin{abstract}
In the class of Sobolev vector fields in $\R^n$ of bounded divergence, for which the theory of DiPerna and Lions provides a well defined notion of flow, we characterize the vector fields whose flow commute in terms of the Lie bracket and of a regularity condition on the flows themselves. This extends a classical result of Frobenius in the smooth setting.
\end{abstract}

\section{Introduction}

Let $X,Y \in C^\infty\cap L^\infty(\R^n,\R^n)$ and let 
$\Phi_t^X$ and $\Phi_s^Y$ are the flows of the two fields, namely the solutions of the ODE
\begin{equation}
\begin{cases}
\partial_t \Phi_t^X(x) = X(\Phi_t^X(x))
\\
\Phi_0^X(x) = x
\end{cases}
\end{equation}
with vector fields $X$ and $Y$, respectively. Then, a classical result in Dynamical Systems states that:
\begin{equation}\label{result}
\Phi_t^X\circ\Phi_s^Y = \Phi_s^Y\circ \Phi_t^X, \forall t,s \in \R \qquad \Longleftrightarrow \qquad [X,Y] \equiv 0,
\end{equation}
where the bracket $[X,Y]$ is defined as
\[
[X,Y]\doteq DYX-DXY.
\]
The equivalence \eqref{result} has already been settled for locally Lipschitz fields in \cite{COMM}.
The aim of this paper is to study the validity of \eqref{result} in the weak setting of Regular Lagrangian Flows. This theory was mainly developed thanks to the effort of DiPerna \& Lions and Ambrosio, in the seminal papers \cite{DPL,AMI}. 

\begin{theorem}\label{thm:main}
Let $p \ge 1$, $X,Y \in W^{1,p}_{\rm loc}\cap L^\infty(\R^n,\R^n)$ be vector fields with bounded divergence and let $\Phi_t^X,\Phi_s^Y$ be their Regular Lagrangian Flows. The following two assertions are equivalent:
\begin{enumerate}
\item the flows of $X$ and $Y$ commute
$$\Phi_t^X\circ\Phi_s^Y = \Phi_s^Y\circ \Phi_t^X \qquad \mbox{for every } s,t\in \R,\;\mbox{for a.e. } z\in \R^n;$$
\item the flow map $\Phi_t^X$ is weakly (Lie) differentiable in direction $Y$ with locally bounded derivative (see Definition~\ref{defn:weakdiff} below) for all $t\in \R$ and the Lie bracket $[X,Y]$ is $0$.
\end{enumerate}
%
\end{theorem}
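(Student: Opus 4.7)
The classical smooth proof of the implication $[X,Y]=0\Rightarrow$ commutativity computes $\frac{d}{dt}\big(\Phi^X_{-t}\circ\Phi^Y_s\circ\Phi^X_t(x)\big)$ through the chain rule and obtains a quantity proportional to the Lie bracket pushed along the flow. In our setting the pointwise chain rule is unavailable, so this identity has to be replaced by a weak version which uses only the Sobolev regularity of $X,Y$, the $L^\infty$ bound, and the uniqueness properties of Regular Lagrangian Flows in the DiPerna--Lions--Ambrosio framework.

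For the implication (1)$\Rightarrow$(2), assume $\Phi^X_t\circ\Phi^Y_s=\Phi^Y_s\circ\Phi^X_t$ a.e. Then for a.e.~$x$ the curve $s\mapsto \Phi^X_t(\Phi^Y_s(x))$ coincides with $s\mapsto\Phi^Y_s(\Phi^X_t(x))$, which is the integral curve of $Y$ through $\Phi^X_t(x)$; in particular it is absolutely continuous in $s$ with derivative $Y(\Phi^X_t(x))\in L^\infty_{\rm loc}$. This yields the weak Lie differentiability of $\Phi^X_t$ in direction $Y$ with derivative $W_t=Y\circ\Phi^X_t$. To obtain $[X,Y]=0$, I would differentiate the commutativity relation distributionally in $s$ at $s=0$ and in $t$ at $t=0$ (in either order) and equate the two mixed derivatives. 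Since $X,Y\in W^{1,p}_{\rm loc}\cap L^\infty$, the products $DY\,X$ and $DX\,Y$ make sense as $L^p_{\rm loc}$ functions, and the resulting identity reads $DY\,X-DX\,Y=0$ a.e.

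For the converse implication (2)$\Rightarrow$(1), let $W_t\in L^\infty_{\rm loc}$ be the weak Lie derivative of $\Phi^X_t$ in direction $Y$ provided by (2). The plan is to show $W_t=Y\circ\Phi^X_t$ and then to conclude by uniqueness of the flow of $Y$. Formally, differentiating $\partial_t\Phi^X_t=X\circ\Phi^X_t$ in the direction of $Y$ yields
\[
\partial_t W_t=DX(\Phi^X_t)\,W_t,
\]
while differentiating $\Phi^X_t$ in $t$ and applying the chain rule along trajectories gives
\[
\partial_t(Y\circ\Phi^X_t)=DY(\Phi^X_t)\,X(\Phi^X_t).
\]
The difference $V_t:=W_t-Y\circ\Phi^X_t$ therefore satisfies $V_0=0$ and
\[
\partial_t V_t=DX(\Phi^X_t)\,V_t-[X,Y](\Phi^X_t).
\]
The assumption $[X,Y]=0$ removes the forcing, and a Gronwall-type argument along trajectories yields $V_t\equiv 0$. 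Consequently, both $s\mapsto\Phi^X_t(\Phi^Y_s(x))$ and $s\mapsto\Phi^Y_s(\Phi^X_t(x))$ are absolutely continuous curves solving $\dot\gamma(s)=Y(\gamma(s))$ with initial datum $\Phi^X_t(x)$; uniqueness of the RLF of $Y$ (available since $\dv Y\in L^\infty$) forces them to coincide, giving the commutativity.

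I expect the main obstacle to be the rigorous justification of the two displayed evolution equations in the weak framework of (2). The second identity is a version of the chain rule for RLF's of Sobolev vector fields, which is by now standard. The first, however, asserts that the ``derivative of the flow in direction $Y$'' solves a linear ODE whose coefficient $DX$ is restricted to the trajectories of $\Phi^X_t$, and with only $X\in W^{1,p}_{\rm loc}$ such a restriction is not a priori well defined pointwise. I would address this by a mollification scheme: replace $X$ by $X^\eps=X\ast\rho_\eps$, write the smooth identity for the associated $W^\eps_t$, and pass to the limit using a DiPerna--Lions-type commutator estimate to control the error between $DX^\eps(\Phi^X_t)W_t$ and its limit; the $L^\infty_{\rm loc}$ bound on $W_t$ furnished by (2) will be essential to absorb the commutator term. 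A parallel mollification will likely be needed for the duality step in (1)$\Rightarrow$(2) in order to read off $[X,Y]=0$ as an a.e.\ identity rather than merely a distributional one.
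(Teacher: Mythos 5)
Your overall architecture matches the paper's: reduce everything to the single distributional identity ``weak Lie derivative of $\Phi^X_t$ in direction $Y$ equals $Y\circ\Phi^X_t$'' (the paper calls this $T_t[X,Y]\equiv 0$), establish it from $[X,Y]=0$ by comparing $W_t$ and $Y\circ\Phi^X_t$ as solutions of the linear ODE $\partial_t A = DX(\Phi^X_t)A$ along trajectories, and conclude by uniqueness of the Regular Lagrangian Flow of $Y$. Your computation of the forced equation $\partial_t V_t = DX(\Phi^X_t)V_t - [X,Y](\Phi^X_t)$ is correct, and you correctly flag the rigorous derivation of $\partial_t W_t = DX(\Phi^X_t)W_t$ as the main technical hurdle; the paper handles it exactly as you anticipate, through a DiPerna--Lions commutator estimate packaged in their Lemma~\ref{lemmag}.

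There is, however, a genuine gap in the last step of $(2)\Rightarrow(1)$. After establishing $W_t = Y\circ\Phi^X_t$ you write that ``consequently'' $s\mapsto\Phi^X_t(\Phi^Y_s(x))$ is an absolutely continuous curve solving $\dot\gamma(s)=Y(\gamma(s))$. But $W_t=Y\circ\Phi^X_t$ is the distributional statement
\[
\int_{\R^n}(\Phi^X_t,D\varphi\,Y)+\dv(Y)(\Phi^X_t,\varphi)\,dz=-\int_{\R^n}(Y\circ\Phi^X_t,\varphi)\,dz,
\]
in which $Y$ is evaluated at $z$, not along the trajectory $\Phi^Y_s(z)$, and $\Phi^X_t$ is not known to be Sobolev, so there is no chain rule to turn this into the pointwise ODE in $s$. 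Passing from this identity (the paper's $T_t\equiv 0$) to the transported one (the paper's $T_{t,s}\equiv 0$, which \emph{is} the weak form of the ODE for $s\mapsto\Phi^X_t\circ\Phi^Y_s$) is precisely the content of their Section~3.2: one first records that the group property gives $T_s[Y,Y]\equiv 0$, then applies the renormalization Lemma~\ref{lemmag} with $g=\Phi^X_t\ast\rho_\eps$, and finally passes $\eps\to 0$ via a second commutator-type estimate in which the input $T_t\equiv 0$ is used to identify the weak limit of $D(\Phi^X_t\ast\rho_\eps)Y$. Without this step your ``consequently'' is unjustified, and it is not a routine omission: it is the place where the weak-differentiability hypothesis actually does its work, and where the mollification has to act on the non-Sobolev flow $\Phi^X_t$ rather than on the vector fields. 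Your proposed mollification of $X$ alone would not produce it.

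A smaller imprecision: in $(1)\Rightarrow(2)$ the derivative of $s\mapsto\Phi^Y_s(\Phi^X_t(x))$ is $Y(\Phi^Y_s(\Phi^X_t(x)))$ for a.e.\ $s$, not the constant $Y(\Phi^X_t(x))$; the latter is only the value at $s=0$, which is what enters the weak Lie derivative after the change-of-variables/Liouville argument that turns the $s$-ODE into the $z$-integral identity. This step also deserves a few lines rather than a single clause, since it is where the density $\xi$ of the flow and its Lipschitz-in-time regularity are used.
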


The question of commutativity of the flow arises naturally in various fields even in the nonsmooth setting. For instance, it is linked to the Frobenius theorem on involutive distributions of planes, see \cite{AM17,AM20} and the references therein, and it also appears in the context of analysis on nonsmooth RCD spaces, as in \cite{GRCD}.
\\
\\
The assumption on the boundedness of $X$ and $Y$ is helpful to avoid technical issues and is quite common in the literature, see for instance \cite{DLC}, but it is not essential for the proof. The statement of Theorem~\ref{thm:main} differs from the statement for smooth (or Lipschitz) vector fields because in the second of the two equivalent conditions, a certain regularity of the first flow appears.  It is clear that, if one of the two vector fields of Theorem~\ref{thm:main} is assumed to be Lipschitz, for instance $X$, then $\Phi_t^X$ is Lipschitz as well, so in particular $D\Phi_t^XY \in L^\infty$. Under this assumption we recover \eqref{result}, proving a more general version of \cite{COMM}, since only one of the two vector fields is Lipschitz. Notice moreover that in this case we also find the weak differentiability of the second vector field with respect to the first, i.e. $D\Phi_t^YX \in L^\infty$. 
\\
\\
For divergence-free Sobolev vector fields several constructions \cite{ACMM,JAB,Mar20} have shown that the flow may be non-differentiable, namely may not belong to any Sobolev space; {in particular}, \cite{Mar20} considers an autonomous vector field (as in our situation) in the lowest possible dimension $n=2$. However, the key feature of this theorem is that the commutativity of flows carries a nontrivial regularity information on the differentiability of the flow, which in turn allows to establish a full equivalence.
\\
\\
Except for the Lipschitz case, the only situation of which we are aware where $W^{1,p}$ differentiability of the flow can be proved is in dimension $2$, under some additional assumptions on $X$, such as $X$ continuous and always non-zero. This will be shown in the forthcoming paper of E. Marconi, \cite{Mar20}. Also in this case, Theorem~\ref{thm:main} implies then the validity of \eqref{result}, as we will show in Section \ref{EL}.
\\
\\
It was shown in a series of works \cite{LL,ALM,DLC,JAB1} that flows of Sobolev vector fields with bounded divergence have a notion of differentiability in a measure theoretic sense (see also \cite{BDN} for the case of BV vector fields).
We underline in the next corollary that, in the case of two vector fields $X$ and $Y$ as in Theorem \ref{thm:main}, the weak differentiability established in (ii) relates to the measure theoretic differential.

\begin{corollary}\label{chainrule}
Let $p, X, Y, \Phi_t^X,\Phi_s^Y$ be as in  Theorem~\ref{thm:main}. Let moreover $q \in [1,+\infty)$. Suppose the flows of $X$ and $Y$ commute. Then,
\[
\Delta_h(t,s,z)\doteq \frac{\Phi_t^X(\Phi_{s+h}^Y(z))-\Phi_t^X(\Phi_s^Y(z)))}{h}
\]
converges {weakly} in $L^q_{\loc}$ to the weak (Lie) derivative {$f = Y\circ \Phi_t^X$} of $\Phi_t^X$ in direction $Y$. {If $p > 1$, then the convergence is in the strong (local) $L^q$ topology and, denoting by $d\Phi_t^X(z)$ the approximate differential of $z\mapsto\Phi_t^X$ at $z$, the following holds almost everywhere: 
\begin{equation}\label{id}
f(z)=d\Phi_t^X(z)Y(z) = Y(\Phi_t^X(z)).
\end{equation}}
\end{corollary}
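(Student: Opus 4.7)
The whole corollary reduces to a single algebraic identity. By the semigroup property of the regular Lagrangian flow $\Phi^Y$ and the commutativity assumption,
$$\Phi_t^X(\Phi_{s+h}^Y(z)) = \Phi_t^X\bigl(\Phi_h^Y(\Phi_s^Y(z))\bigr) = \Phi_h^Y\bigl(\Phi_t^X(\Phi_s^Y(z))\bigr)\quad\text{for a.e. }z.$$
Setting $w = \Phi_t^X(\Phi_s^Y(z))$ and using the integral form of the defining ODE, $\Phi_h^Y(w) - w = \int_0^h Y(\Phi_\tau^Y(w))\,d\tau$, one obtains the key identity
$$\Delta_h(t,s,z) = \frac{1}{h}\int_0^h Y\bigl(\Phi_\tau^Y(w)\bigr)\,d\tau,$$
which exhibits $\Delta_h$ as a Steklov average along the $Y$-flow based at $\Phi_t^X(\Phi_s^Y(z))$. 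This is the infinitesimal manifestation of the commutativity assumption and is the engine of both the convergence statements and the chain rule.

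For the weak $L^q_{\loc}$ statement, I would change variables via $w = \Phi_t^X(\Phi_s^Y(z))$, whose Jacobians are controlled by the bounded compressibility of the two RLFs granted by $\dv X,\dv Y\in L^\infty$, and invoke the weak continuity of $\tau\mapsto Y\circ \Phi_\tau^Y$ in $L^q_{\loc}$, which is classical for RLFs of bounded divergence when $Y\in L^\infty$. The Steklov average at $\tau = 0$ then converges to $Y(\Phi_t^X(\Phi_s^Y(z)))$, i.e.\ to the weak Lie derivative $f = Y\circ\Phi_t^X$ transported along $\Phi_s^Y$. When $p > 1$, the Sobolev RLF theory of \cite{LL,ALM,DLC,JAB1} upgrades this to \emph{strong} $L^q_{\loc}$ continuity of $\tau\mapsto Y\circ \Phi_\tau^Y$, which immediately upgrades the convergence of the Steklov averages, and hence of $\Delta_h$, from weak to strong in $L^q_{\loc}$.

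For the pointwise identity \eqref{id}, the equality $f(z) = Y(\Phi_t^X(z))$ is read off from the previous step at $s = 0$. To obtain the remaining equality $f(z) = d\Phi_t^X(z)Y(z)$, I would invoke the approximate differentiability of $\Phi_t^X$ provided by \cite{ALM,DLC,JAB1} in the regime $p > 1$: at a density-one set of points $z$, the integral representation of the $Y$-flow gives $(\Phi_h^Y(z)-z)/h \to Y(z)$ while the approximate differential of $\Phi_t^X$ yields the expansion
$$\frac{\Phi_t^X(\Phi_h^Y(z))-\Phi_t^X(z)}{h} = d\Phi_t^X(z)\,\frac{\Phi_h^Y(z)-z}{h} + o(1).$$
Combined with the strong $L^q_{\loc}$ convergence of $\Delta_h(t,0,\cdot)$ from the previous step, this identifies the limit as $d\Phi_t^X(z) Y(z)$, giving \eqref{id}.

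The delicate point is the last step: approximate differentiability is a measure-theoretic statement, so promoting it to a pointwise expansion along the $Y$-trajectories requires a simultaneous selection of good Lebesgue points both for $d\Phi_t^X$ (so that the linear expansion is valid at $z$) and for $Y$ (so that $(\Phi_h^Y(z)-z)/h\to Y(z)$), together with a control of the error $o(1)$ uniform enough to survive the passage to the limit. This measure-theoretic chain rule along a transport flow is precisely what is developed in the cited works and underlies the hypothesis $p > 1$.
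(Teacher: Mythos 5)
Your opening observation --- that commutativity plus the group law turn $\Delta_h$ into the Steklov average $\frac1h\int_0^h Y(\Phi_\tau^Y(w))\,d\tau$ with $w=\Phi_t^X(\Phi_s^Y(z))$ --- is exactly the algebraic reduction the paper uses (they write $\Delta_h(t,0,z)=\frac{\Phi_h^Y(\Phi_t^X(z))-\Phi_t^X(z)}{h}$), but the way you then exploit it is genuinely different and, for the convergence statement, actually cleaner. The paper proves an $L^q$ equibound, extracts a weak subsequential limit, identifies it by testing against $C^1_c$ fields, and appeals separately to Proposition~\ref{prop:1}(3) to see that this limit is the weak Lie derivative. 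Your route via the strong $L^q_{\loc}$-continuity of $\tau\mapsto Y\circ\Phi_\tau^Y$, Jensen, and bounded compressibility gives \emph{strong} $L^q_{\loc}$ convergence directly, and in fact for all $p\ge 1$: the continuity of $\tau\mapsto Y\circ\Phi_\tau^Y$ in $L^q_{\loc}$ only uses $Y\in L^\infty$, the uniform-in-space Lipschitz bound on the flow in time, and the bounded density $\xi$ --- it does not use the Sobolev regularity of $Y$ at any exponent. So your sentence "$p>1$ upgrades weak to strong continuity of $\tau\mapsto Y\circ\Phi_\tau^Y$" misattributes the role of $p>1$; that hypothesis is needed for something else entirely, see below. (Two small omissions: you do not actually check that $Y\circ\Phi_t^X$ satisfies Definition~\ref{defn:weakdiff}, i.e.\ that it \emph{is} the weak Lie derivative --- this is precisely Proposition~\ref{prop:1}(3), which is available --- and you should say a word about the integral ODE at $w=w(z)$ holding for a.e.\ $z$, which follows since the RLF maps null sets to null sets.)

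The genuine gap is in the last paragraph, and you half-see it yourself. The identity $\lim_{h\to 0}\frac{\Phi_t^X(\Phi_h^Y(z))-\Phi_t^X(z)}{h}=d\Phi_t^X(z)Y(z)$ does not follow from "$(\Phi_h^Y(z)-z)/h\to Y(z)$ a.e." plus "$\Phi_t^X$ approximately differentiable a.e.": approximate differentiability gives a first-order expansion only along sequences $w\to z$ that avoid an exceptional set of density zero, while $\Phi_h^Y(z)$ is a \emph{single} trajectory and nothing a priori prevents it from living in the exceptional set. This is exactly what the paper's Lemma~\ref{ra} resolves, and it is the technical heart of the $p>1$ part: it uses the Federer/Whitney characterization of approximate differentiability (a countable family of $C^1$ functions $f_n$ coinciding with $\Phi_t^X$ on Borel sets $A_n$ exhausting $B_R$ up to small measure), the measure-preservation of $\Phi^Y_{-h}$ to control $|\Phi^Y_{-h}(B_R\setminus A_n)|$, uniform continuity of $Df_n$, and a Chebyshev estimate on $\fint_0^h\|Y(\Phi_\tau^Y(z))-Y(z)\|\,d\tau$ obtained by mollifying $Y$ and passing to the limit, to conclude that $F_h\to 0$ \emph{in measure} on compact sets. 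Deferring this step to \cite{LL,ALM,DLC,JAB1} is not a valid move: those references supply the approximate differentiability of the RLF (which is why $p>1$ enters), but not the "chain rule along a transport trajectory", which the paper proves from scratch. As written, your proposal for \eqref{id} therefore stops at exactly the point where the real work begins.
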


{Before explaining the strategy of the proof of the main Theorem, let us make some comments about the previous corollary. First of all, the difference between the case $p > 1$ and $p = 1$ stems from the fact that in the former one knows the approximate differentiability of the flow, while in the latter this is, to the best of our knowledge, not known. In fact, a result of \cite{JAB1} proves the \emph{differentiability in measure} of the flow, a strictly weaker notion than approximate differentiability, see \cite{AM}. Moreover, as observed for instance in \cite{BDN}, the approximate differential $d\Phi_t^X(z)$ solves the \emph{classical} ODE
\[
\partial_t (d\Phi_t^X) = DX\circ \Phi_t^X(d\Phi_t^X)
\]
for a.e. $z$. Therefore one can see that the equality $d\Phi_t^X(z)Y(z) = Y(\Phi_t^X(z))$ stated in \eqref{id} holds under the weaker assumption that $[X,Y]=0$, since these functions solve the same ODE (see \eqref{pdemixed} below) in time for a.e. $z$.
Furthermore, we observe that that previous corollary can be applied with $X=Y$ since the assumption of the commutativity of flows simply reduces to the group property of the flow $\Phi_s^Y$ and is therefore satisfied.} Hence, the flow of $Y$ is weakly differentiable in direction $Y$and its derivative is given by $Y\circ \Phi_s^Y$,
namely
\begin{equation}\label{YY}
-\int_{\R^n}(\Phi_s^Y,D\varphi Y)dz = \int_{\R^n}(Y(\Phi_s^Y(z)),\varphi(z))dz + \dv(Y)(\varphi,\Phi_s^Y)dz,\quad\forall s \in \R, \forall\varphi \in C^\infty_c(\R^n,\R^n).
\end{equation}
Moreover, if $p > 1$, the weak derivative can be rewritten in terms of the measure-theoretic differential of the flow as $d\Phi_s^Y(z)Y(z)$ and coincides with the strong $L^1$ limit of the incremental quotients
$\frac{\Phi_{h}^Y(z)-z}{h}$ as $h \to 0$.
\newline


%
%


The strategy to prove Theorem \ref{thm:main} is the following. In the smooth case, the commutativity of the flows of $X,Y$ is equivalent to the request that
\[
\frac{d}{dt}(\Phi_t^X)_*Y = 0, \forall t \in \R,
\] 
where $(\Phi_t^X)_*Y$ denotes the pushforward of $Y$ through the flow of $X$. In other words, $Y$ must be invariant under the flow of $X$. More explicitely, the previous equations amounts to say that
\begin{equation}\label{invariance}
D\Phi_t^X(z)Y(z) = Y(\Phi_t^X(z)), \quad \forall z \in \R^n, t \in \R.
\end{equation}
The latter allow us to we introduce a family of distributions of order 1, $T_t[X,Y]$, depending on the time parameter $t$. As an intermediate step, we also introduce another family of distributions, $T_{t,s}[X,Y]$, depending on the time parameters $t,s \in \R$. These distributions serve as a bridge between the two ends of the double implication of \eqref{result}, in the sense that they have the property that
\[
[X,Y] \equiv 0 \Longleftrightarrow T_t[X,Y] = 0,\forall t \Longleftrightarrow T_{t,s}[X,Y] = 0,\forall t,s \Longleftrightarrow \Phi_t^X\circ\Phi_s^Y = \Phi_s^Y\circ \Phi_t^X, \forall t,s,
\]
in the smooth setting. In the weak setting of Regular Lagrangian Flows, we will prove in Section \ref{mod} the following implications:
\[
[X,Y] \equiv 0 \Longleftarrow T_t[X,Y] = 0,\forall t \Longleftrightarrow T_{t,s}[X,Y] = 0,\forall t,s \Longleftrightarrow \Phi_t^X\circ\Phi_s^Y = \Phi_s^Y\circ \Phi_t^X, \forall t,s.
\]
Adding the hypothesis of differentiability of $\Phi_t^X$ in the direction of $Y$ for every $t \in \R$, we will also show that
\[
[X,Y] \equiv 0 \Rightarrow T_t[X,Y] = 0, \forall t \in \R,
\]
that will conclude the proof of Theorem \ref{thm:main}.
A different approach to the validity of \eqref{result} for nonsmooth vector fields may be based on the stability of Regular Lagrangian Flows with respect to $L^1$ convergence of vector fields, through an approximation procedure. This strategy appears often in the literature, for instance to show  the group property of the flow.
Since the Lie bracket is bilinear, it is not clear how to approximate the vector fields without disrupting the constraint $[X,Y] = 0$. 
Therefore, one might try to obtain an apriori estimate of $\|\Phi^X_t\circ\Phi^Y_s - \Phi_s^Y\circ \Phi_t^X\|_r $ in terms of an integral norm of $[X,Y]$. 
Our attempts in finding such an estimate failed, and this is due to the fact that even for smooth vector fields the formula
\begin{equation}\label{commgen}
\Phi^X_t\circ\Phi^Y_s (z)- \Phi_s^Y\circ \Phi_t^X(z) = st [X,Y] (z) + o(s^2+t^2)
\quad \forall t \in \R,\forall z \in \R^n
\end{equation}
involves implicitly in the second term of the right hand side the differential of the flows of $X, Y$, which for general Sobolev vector fields are not integrable. On the contrary, such formula can be exploited for Lipschitz vector fields, as done in \cite[Lemma 4.4 and 4.5]{COMM}. 

\section{Definitions and preliminaries}
Throughout the rest of the paper, let $p, X, Y, \Phi_t^X,\Phi_s^Y$ be as in  Theorem~\ref{thm:main}.

\begin{definition}\label{defn:weakdiff}
Let $F \in L^1(\R^n; \R^d)$ and let $Y\in L^\infty(\R^n,\R^n)$ be a vector field with bounded divergence.
We say that $F$ is weakly (Lie) differentiable in the direction of $Y$ if there exists $f \in L^1(\R^{n},\R^d)$ such that for every $\varphi \in C^1_c(\R^n,\R^d)$
\begin{equation}\label{dist0}
\int_{\R^n}(F,D\varphi(z) Y) + \dv(Y) (F,\varphi(z))dz = -\int_{\R^n} (f(z),\varphi(z))dz.
\end{equation}
We call $f$ the weak (Lie) derivative of $F$ in the direction of $Y$. Moreover, we say that $F$ is weakly (Lie) differentiable in the direction of $Y$  with bounded derivative if, in addition to \eqref{dist0}, $f$ is essentially bounded.
\end{definition}

\subsection{Some facts from the DiPerna-Lions-Ambrosio theory}

In this section, we recall some well-known facts about the theory of Regular Lagrangian Flows, in the axiomatization given by Ambrosio in \cite{AMI}. We say that $\Phi^b_t$ is a Regular Lagrangian Flow associated to $b \in L^\infty(\R^n)$ if
\begin{enumerate}
\item\label{meas} For $\mathcal{L}^1$-a.e. $t \in \R$, we have $|\{z:\Phi_t^b(z) \in A\}| = 0$ for every Borel set $A$ with $|A| = 0$;
\item\label{systempde} The following system holds in the sense of distributions:
\[
\begin{cases}
\partial_t\Phi_t^b(z) = b\circ \Phi_t^b(z)\\
\Phi_0(z) = z
\end{cases}
\]
\end{enumerate}
Notice that the field we are considering is autonomous. The theory is developed also for non-autonomous vector-fields, i.e. in general $b = b(t,z)$, but we will only need to consider the autonomous case in this paper. One of the main achievements of the DiPerna-Lions-Ambrosio theory is showing that for every vector field $b \in BV\cap L^\infty(\R^n,\R^n)$ with bounded divergence, such a flow exists and is unique, see \cite[Theorem 6.2-6.4]{AMI}. Furthermore, Regular Lagrangian Flows are stable, in the sense that if $b_k$ is a sequence of smooth fields converging strongly in $L^1_{\loc}$ to $b$, and $\{\dv b_k\}_{k \in \N}$ is equibounded in $L^\infty$, then $\Phi_t^{b_k}$ converges strongly in $L^1_{\loc}$ to $\Phi_t^b$ for every $t \in \R$, see \cite[Theorem 6.6]{AMI} or \cite[Theorem 5.2]{CFL}.
\\
\\
Condition \eqref{meas} in the axioms defining the Regular Lagrangian flow guarantees the existence of a density $\xi$ for which
\begin{equation}\label{density}
\int_{\R^{n + 1}}f(t,\Phi^b_{-t}(z))dzdt = \int_{\R^{n+ 1}}f(t,z)\xi(t,z)dzdt
\end{equation}
for every $f \in C_c(\R^{n + 1})$. Clearly, the function $\xi$ depends on $b$, but we will drop the dependence, as it will always be clear from the context which density we are using. Of course, in the classical case, i.e. if $b$ were smooth, one would have
\[
\xi(t,z) = \det(D\Phi^b_t(z)).
\]
In the smooth case, Liouville Theorem tells us that
\begin{equation}\label{Liou}
\partial_t\det(D\Phi^b_t(z)) = \dv(b)\circ \Phi_t^b\det(D\Phi^b_t(z)).
\end{equation}
The latter yields, through Gronwall inequality,
\begin{equation}\label{LINF}
e^{-T\|\dv b\|_\infty}\le \det(D\Phi_t^b(z)) \le e^{T\|\dv b\|_\infty}, \forall t \in [-T,T], T \in (0, + \infty).
\end{equation}
The stability property of the Regular Lagrangian flows and \eqref{LINF} allow us to conclude that, if $b_\eps = b\star \rho_\eps$ denotes the mollification of $b$,
\[
\det(D\Phi_t^{b_\eps}(\cdot)) \overset{*}{\rightharpoonup} \xi(\cdot,t)
\]
in $L^\infty$, and that moreover
\begin{equation}\label{boundxi}
e^{-T\|\dv b\|_\infty}\le\|\xi\|_{L^\infty((-T,T)\times\R^n)}\le e^{T\|\dv b\|_\infty},
\end{equation}
so that in particular $\xi \in L^\infty([-T,T]\times \R^n)$ for every $T > 0$ and for a.e. $z \in \R^n$,
\begin{equation}\label{xi1}
\xi(0,z) = 1.
\end{equation}
A similar approximation argument also tells us that Liouville Theorem still holds, i.e. for a.e. $z \in \R^n$, $t\mapsto \xi(t,z)$ is a Lipschitz curve that satisfies for a.e. $t \in \R$
\begin{equation}\label{weakLi}
\partial_t\xi = \dv(b)\circ \Phi_t^b\xi.
\end{equation}
The latter can be equivalently intended in the sense of distributions. By \eqref{weakLi} we deduce the bound
\begin{equation}\label{boundlip}
\esssup_{z\in \R^n}\|\xi(\cdot,z)\|_{\Lip([-T,T])}\le C(T,\|\dv Y\|_\infty).
\end{equation}

Finally, we will need a Lemma that is a direct consequence of the \emph{commutator estimate} of DiPerna-Lions. This fact is probably well-known, but we provide a proof (in the appendix) since we were not able to find a reference. 

\begin{lemma}\label{lemmag}
Let $f \in L^1(\R^n)$, $a \in L^\infty(\R^n,\R^m)$, $b \in W^{1,p}(\R^n,\R^n)$ with bounded divergence, $p\ge 1$, and suppose that
\[
\dv(a\otimes b) = f
\]
in the distributional sense. Then, for every $g\in C_c^1(\R^m,\R^n)$, we have
\begin{equation}\label{g}
\dv(g(a)\otimes b) = Dg(a)f - \dv b(Dg(a)a - g(a)),
\end{equation}
in the distributional sense.
\end{lemma}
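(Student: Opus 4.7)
The plan is to proceed by mollification, following the DiPerna--Lions commutator strategy. Let $\rho_\eps$ be a standard mollifier and set $a_\eps \doteq a\star\rho_\eps$; since $a\in L^\infty$, the $a_\eps$ are smooth, uniformly bounded, and converge to $a$ in $L^1_{\loc}$ and a.e.\ along a subsequence.

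The first step is the smooth chain-rule identity, which holds distributionally because $a_\eps$ is smooth and $b\in W^{1,p}$:
\begin{equation*}
\dv(g(a_\eps)\otimes b) \;=\; Dg(a_\eps)\,(Da_\eps\, b) + g(a_\eps)\,\dv(b).
\end{equation*}
To rewrite the awkward factor $Da_\eps\, b$, I would invoke the hypothesis $\dv(a\otimes b)=f$. Convolving it with $\rho_\eps$ gives $f_\eps$ on the right, while the smooth computation $\dv(a_\eps\otimes b)=Da_\eps\, b + a_\eps\dv(b)$ produces
\begin{equation*}
Da_\eps\, b = f_\eps - a_\eps\,\dv(b) - r_\eps,\qquad r_\eps \doteq \dv(a\otimes b)\star\rho_\eps - \dv(a_\eps\otimes b).
\end{equation*}
Substituting in the previous identity yields
\begin{equation*}
\dv(g(a_\eps)\otimes b) \;=\; Dg(a_\eps)\, f_\eps - Dg(a_\eps)\, r_\eps - \dv(b)\bigl(Dg(a_\eps)\, a_\eps - g(a_\eps)\bigr).
\end{equation*}

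The analytic heart of the proof is the vanishing $r_\eps\to 0$ in $L^1_{\loc}$. Splitting formally $\dv(a\otimes b)=(b\cdot\nabla)a+a\,\dv b$, the commutator decomposes into the classical DiPerna--Lions piece $[(b\cdot\nabla)a]\star\rho_\eps-(b\cdot\nabla)a_\eps$ (which vanishes in $L^1_{\loc}$ by the commutator lemma of \cite{DPL}, applied with the $W^{1,p}$-field $b$ against the $L^\infty$-function $a$) and the trivial piece $(a\,\dv b)\star\rho_\eps - a_\eps\,\dv b$, which vanishes because $\dv b\in L^\infty$ and $a_\eps\to a$ in $L^1_{\loc}$.

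It remains to pass to the limit $\eps\to 0$ distributionally. On the left, $g(a_\eps)\otimes b\to g(a)\otimes b$ in $L^1_{\loc}$ by dominated convergence (using that $g$ is bounded and continuous and $b\in L^\infty$), so the distributional divergences converge. On the right, $Dg(a_\eps)$ is uniformly bounded and converges a.e.\ to $Dg(a)$; hence $Dg(a_\eps) f_\eps\to Dg(a)\, f$ in $L^1_{\loc}$ (split into $Dg(a_\eps)(f_\eps-f)$ and $(Dg(a_\eps)-Dg(a))f$ and apply dominated convergence to each piece), $Dg(a_\eps)\, r_\eps\to 0$ in $L^1_{\loc}$ since $Dg$ is bounded, and $\dv(b)(Dg(a_\eps)a_\eps - g(a_\eps))\to \dv(b)(Dg(a)a - g(a))$ in $L^1_{\loc}$ by dominated convergence. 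Matching limits produces \eqref{g}. The main obstacle is the convergence $r_\eps\to 0$; this is essentially the classical DiPerna--Lions commutator lemma in our matrix-divergence setting, and the role of Lemma \ref{lemmag} is precisely to encode this as a clean chain rule for $\dv(a\otimes b)$, so that it can be applied elsewhere in the paper without re-running the commutator computation.
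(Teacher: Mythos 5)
Your argument is correct and follows essentially the same strategy as the paper: mollify $a$, apply the smooth chain rule to $g(a_\eps)$, and recognize that the error term $\dv(a_\eps\otimes b)-\dv(a\otimes b)\star\rho_\eps$ (your $-r_\eps$) is precisely the DiPerna--Lions commutator, which vanishes in $L^1_{\loc}$. The only cosmetic difference is that you decompose this commutator by hand into a transport piece $[(b\cdot\nabla)a]\star\rho_\eps-(b\cdot\nabla)a_\eps$ plus the easy $\dv b$ piece, whereas the paper cites directly the ``full-divergence'' form of the commutator estimate (\cite[Lemma 2.2]{CFL}), i.e.\ $R_\eps[u,b]=\dv(u_\eps b)-\dv((ub)_\eps)\to 0$ componentwise, which already packages both contributions.
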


\section{A bridge made by two distributions}\label{mod}
%

We consider the following distributions of order $1$
\begin{equation}\label{distweaks}
T_{t,s}
[X,Y]
(\varphi) =\int_{\R^n}(Y(\Phi_t^X\circ\Phi_s^Y),\varphi) + (\Phi_t^X\circ\Phi_s^Y,D\varphi Y) + \dv(Y)(\varphi,\Phi_t^X\circ\Phi_s^Y)dz,
\end{equation}
and
\begin{equation}\label{distweak}
T_t[X,Y](\varphi) =\int_{\R^n}(Y\circ\Phi^X_t,\varphi) + (\Phi_t^X,D\varphi Y) + \dv(Y)(\varphi,\Phi_t^X)dz.
\end{equation}
In both expressions, $\varphi \in C^\infty_c(\R^n,\R^n)$. In the case of smooth vector fields, they represent the functions $Y(\Phi_t^X\circ\Phi_s^Y)- D(\Phi_t^X\circ\Phi_s^Y)Y(z)$ and $Y\circ\Phi^X_t(z) - D\Phi_t^X(z)Y(z)$ respectively.
\\
\\
We first claim that the commutativity of flows is equivalent to the fact that either of these distributions is identically $0$.
\begin{prop}\label{prop:1}
 The following assertions are equivalent:
\begin{enumerate}
\item $\Phi_t^X\circ\Phi_s^Y = \Phi_s^Y\circ \Phi_t^X ${ for every }$ s,t\in \R,$ a.e. in $\R^n;$
\item
$T_{t,s} \equiv 0$ for every  $t,s \in \R$;
\item
$T_{t} \equiv 0$ for every  $t \in \R$.
\end{enumerate}
Moreover it holds
\begin{equation}
\label{eqn:deriv-brack}
\left.\frac{d}{dt}\right|_{t = 0}T_t[X,Y] = [X,Y]\llcorner dz;
\end{equation}
in particular, if $T_t \equiv 0$ then $[X,Y]\equiv 0$.
\end{prop}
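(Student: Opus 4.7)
My plan is to encode each condition as a distributional PDE for the composition $F(s,z) := \Phi_t^X(\Phi_s^Y(z))$ and to close the equivalences using uniqueness of the RLF of $Y$. Two properties of $F$ drive (1)$\Leftrightarrow$(2): unwinding the definition gives $T_{t,s}[X,Y] \equiv 0 \iff L_Y F(s,\cdot) = Y \circ F(s,\cdot)$ distributionally in $z$; and the DiPerna--Lions transport theorem, applied componentwise (the linearly growing identity part of $\Phi_t^X$ being handled by a truncation argument), gives the transport identity $\partial_s F = L_Y F$ distributionally on $\R \times \R^n$. The same transport identity specialized to the identity map delivers the auxiliary invariance $L_Y \Phi_s^Y = Y \circ \Phi_s^Y$, used later.

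For (1)$\Rightarrow$(2): rewriting $F = \Phi_s^Y \circ \Phi_t^X$ under commutativity and applying the RLF property pointwise at the admissible initial datum $\Phi_t^X(z)$ (admissible because $(\Phi_t^X)_\ast \mathcal{L}^n$ has bounded density) yields $\partial_s F = Y \circ F$ distributionally; combined with the transport identity this gives $L_Y F = Y \circ F$ for a.e.\ $s$, and continuity of $s \mapsto T_{t,s}(\varphi)$ (dominated convergence using compact support of $\varphi$ and local boundedness of the flows on its support) extends the identity to every $s$. Conversely, (2) combined with the transport identity gives $\partial_s F = Y \circ F$ with $F(0,z) = \Phi_t^X(z)$; uniqueness of the RLF of $Y$ at the admissible initial point $\Phi_t^X(z)$ forces $F(s,z) = \Phi_s^Y(\Phi_t^X(z))$, which is (1). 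The direction (2)$\Rightarrow$(3) is immediate from $\Phi_0^Y = \id$.

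For (3)$\Rightarrow$(2) I would establish the chain rule
\[
L_Y(h \circ \Phi_s^Y) = (L_Y h) \circ \Phi_s^Y
\]
for $h \in L^\infty_{\loc}$ with $L_Y h \in L^\infty_{\loc}$, and apply it componentwise with $h = \Phi_t^X$ and $L_Y h = Y \circ \Phi_t^X$. Mollifying $h_\eps := h \ast \rho_\eps$, the smooth identity $L_Y(h_\eps \circ \Phi_s^Y) = (L_Y h_\eps) \circ \Phi_s^Y$ follows from the classical chain rule together with the invariance established above; the DiPerna--Lions commutator estimate underlying Lemma~\ref{lemmag} supplies strong $L^1_{\loc}$ convergence $L_Y h_\eps \to L_Y h$, while the two-sided bound \eqref{boundxi} on the density $\xi$ of $(\Phi_s^Y)_\ast \mathcal{L}^n$ legitimates the passage to the limit in the compositions with $\Phi_s^Y$.

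The derivative identity $\frac{d}{dt}|_{t=0} T_t[X,Y] = [X,Y] \llcorner dz$ is a direct computation: differentiating $T_t[X,Y](\varphi)$ in $t$ using $\partial_t \Phi_t^X = X \circ \Phi_t^X$ and the Ambrosio renormalization $\partial_t(Y \circ \Phi_t^X) = (DY\,X) \circ \Phi_t^X$ (valid since $Y \in W^{1,p}$ has bounded divergence and $X \in L^\infty$), evaluating at $t = 0$ where $\Phi_0^X = \id$, and integrating the middle term $\int(X, D\varphi\,Y)\,dz$ by parts, the $\dv Y$ contributions cancel and the remainder is $\int(DY\,X - DX\,Y, \varphi)\,dz = \int([X,Y], \varphi)\,dz$. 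The main obstacle is the chain rule step in (3)$\Rightarrow$(2): the strong (rather than merely weak) $L^1_{\loc}$ convergence of the DL commutators is crucial to identify the composed limit with $(L_Y h) \circ \Phi_s^Y$, and a parallel truncation is required to control the linearly growing identity part of $\Phi_t^X$.
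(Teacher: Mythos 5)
Your proposal is correct and follows essentially the same route as the paper: the equivalence (1)$\Leftrightarrow$(2) is the paper's manual change-of-variable computation repackaged as an invocation of the DiPerna--Lions transport theorem together with RLF uniqueness, and your ``chain rule'' $L_Y(h\circ\Phi_s^Y)=(L_Y h)\circ\Phi_s^Y$ for (3)$\Rightarrow$(2) is precisely what the paper establishes by applying Lemma~\ref{lemmag} to a mollification $g=\Phi_t^X\star\rho_\eps$ and passing to the limit in \eqref{weakcon} via the commutator estimate (with the same truncation of the linearly growing identity part). The only cosmetic divergence is that you obtain strong $L^1_{\loc}$ convergence of $L_Y h_\eps$ where the paper contents itself with the weak convergence in \eqref{weakcon}; both suffice, and the proof of \eqref{eqn:deriv-brack} is the identical direct computation.
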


Next we claim that the missing implication, namely that $[X,Y]\equiv 0$ implies any among (1), (2) or (3), holds under the additional assumption on the differentiability of the flow in the direction of the other vector field

\begin{prop}\label{eqcond}
Suppose $[X,Y] = 0$ a.e. and that the flow map $\Phi_t^X$ is weakly (Lie) differentiable in direction $Y$ with derivative $f \in L^\infty_{\loc}(\R,L_{\loc}^{p'}(\R^{n},\R^n))$ for all $t\in \R$ and for $p'$ H\"older conjugate of $p$, with $p$ such that $X \in W^{1,p}_{\loc}$.
Then,
\[
T_t[X,Y] \equiv 0.
\]
\end{prop}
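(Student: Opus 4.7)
The plan is to represent $T_t[X,Y]$ as pairing with an $L^{p'}_{\loc}$ density $g_t := Y\circ \Phi_t^X - f_t$, show that $g_t$ satisfies the distributional ODE $\partial_t g_t = DX(\Phi_t^X)g_t$ with $g_0 \equiv 0$, and conclude by a Gronwall-type argument applied pointwise in $z$. By Definition \ref{defn:weakdiff} the last two terms in \eqref{distweak} pair $\varphi$ against $-f_t$, so $T_t[X,Y](\varphi) = \int_{\R^n}(g_t, \varphi)\, dz$; a direct integration by parts, checking that the weak Lie derivative of $\id$ along $Y$ is $Y$ itself, gives $g_0 \equiv 0$.

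The heart of the argument is the pair of distributional identities on $(-T,T)\times \R^n$
\begin{equation}\label{twoid}
\partial_t f_t = DX(\Phi_t^X)f_t, \qquad \partial_t(Y\circ \Phi_t^X) = DX(\Phi_t^X)Y(\Phi_t^X),
\end{equation}
whose difference is exactly $\partial_t g_t = DX(\Phi_t^X)g_t$. For the first, I would differentiate the defining relation of $f_t$ in $t$ using $\partial_t \Phi_t^X = X\circ \Phi_t^X$ to identify $\partial_t f_t$ with the weak Lie derivative of $X\circ \Phi_t^X$ along $Y$. The defining relation of $f_t$ itself is equivalent to $\dv(\Phi_t^X \otimes Y) = f_t + \Phi_t^X \dv Y$, so an application of Lemma \ref{lemmag} with $a = \Phi_t^X$, $b = Y$ and outer function $g$ equal to (an approximation of) $X$ produces, after cancellations,
\[
\dv(X(\Phi_t^X)\otimes Y) = DX(\Phi_t^X)f_t + \dv Y\, X(\Phi_t^X),
\]
which rewrites as the first equation in \eqref{twoid}. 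The second identity follows by the same mechanism with the roles of $X$ and $Y$ interchanged: the autonomy and semigroup property of the flow, combined with the density formula \eqref{density} (differentiating the pushed-forward integral at $\tau = 0$), shows that $\Phi_t^X$ is weakly Lie differentiable along $X$ itself with derivative $X\circ \Phi_t^X$, i.e.\ $\dv(\Phi_t^X \otimes X) = X\circ\Phi_t^X + \Phi_t^X \dv X$. A second application of Lemma \ref{lemmag}, with $g$ approximating $Y$, yields $\dv((Y\circ \Phi_t^X)\otimes X) = DY(\Phi_t^X)X(\Phi_t^X) + Y(\Phi_t^X)\dv X$, so that the weak Lie derivative of $Y\circ \Phi_t^X$ along $X$, which one verifies coincides with $\partial_t(Y\circ \Phi_t^X)$, equals $DY(\Phi_t^X)X(\Phi_t^X)$. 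The hypothesis $[X,Y]=0$ a.e.\ together with the fact that $\Phi_t^X$ preserves Lebesgue null sets then gives $DY(\Phi_t^X)X(\Phi_t^X) = DX(\Phi_t^X)Y(\Phi_t^X)$ a.e., proving the second identity in \eqref{twoid}.

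Subtracting, one obtains $\partial_t g_t = DX(\Phi_t^X)g_t$ distributionally; together with $g_0 \equiv 0$ this integrates to
\[
\int_{\R^n}(g_t, \varphi)\, dz = \int_0^t \int_{\R^n}(DX(\Phi_s^X)g_s, \varphi)\, dz\, ds \qquad \forall\, \varphi \in C^\infty_c(\R^n,\R^n).
\]
H\"older's inequality with exponents $(p,p')$ combined with the bound \eqref{boundxi} on $\xi$ yields $DX(\Phi_s^X)g_s \in L^1_{\loc, z}$ uniformly in $s\in [-T,T]$, so Fubini's theorem transfers the equation to a pointwise statement: for a.e.\ $z \in \R^n$ the map $t\mapsto g_t(z)$ is absolutely continuous and satisfies the linear ODE $\dot g_t(z) = DX(\Phi_t^X(z))g_t(z)$ with $g_0(z)=0$. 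Since the coefficient $s\mapsto DX(\Phi_s^X(z))$ lies in $L^1_{\loc}(\R)$ for a.e.\ $z$, Gronwall's inequality for ODEs with $L^1$ coefficients forces $g_t(z)\equiv 0$, whence $T_t[X,Y]\equiv 0$.

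The main technical obstacle is the rigorous application of Lemma \ref{lemmag}: the outer function $g$ we need to plug in is $X$ or $Y$, which belongs only to $W^{1,p}\cap L^\infty$ rather than $C^1_c$, and the inner field $a=\Phi_t^X$ is only locally bounded. Both issues are handled by a double approximation—a spatial cutoff of $\Phi_t^X$ and a mollification-plus-cutoff of $X,Y$—followed by passage to the limit in $L^1_{\loc}$; the integrability assumption $f_t \in L^{p'}_{\loc}$ matching the H\"older conjugate of $p$ is exactly what ensures that the product $DX(\Phi_t^X)f_t$ is well-defined in $L^1_{\loc}$ and that the approximating identities survive in the limit.
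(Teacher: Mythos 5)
Your proof follows the paper's strategy quite closely: you represent $T_t[X,Y]$ as pairing against $g_t = Y\circ\Phi_t^X - f_t$, show that both $f_t$ and $Y\circ\Phi_t^X$ solve the linear ODE $\partial_t A = DX(\Phi_t^X)A$ pointwise in $z$ with the same initial datum, and conclude by Gronwall. The derivation of $\partial_t f_t = DX(\Phi_t^X)f_t$ via Lemma~\ref{lemmag} with $a=\Phi_t^X$, $b=Y$, $g$ approximating $X$, followed by time-differentiation of the defining relation, is exactly the paper's argument.

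Where you diverge is in establishing $\partial_t(Y\circ\Phi_t^X)=DX(\Phi_t^X)Y(\Phi_t^X)$. The paper does this directly: mollify $Y$ (and $X$), compute $\partial_t(Y_\eps\circ\Phi_t^X) = DY_\eps(\Phi_t^X)X(\Phi_t^X)$ classically along the flow, pass to the limit using stability and the boundedness of the density, and then invoke $[X,Y]=0$ a.e.\ pushed through the measure-preserving flow. You instead detour through $T_t[X,X]\equiv 0$ and a second application of Lemma~\ref{lemmag} to compute the \emph{spatial} Lie derivative of $Y\circ\Phi_t^X$ along $X$, and then assert that this ``one verifies coincides with $\partial_t(Y\circ\Phi_t^X)$.'' That last assertion is not routine: equating the time derivative along the flow with the spatial Lie derivative along $X$ is, in the weak setting, precisely the content of the mollification argument the paper uses to compute $\partial_t(Y\circ\Phi_t^X)$ directly, so your detour does not avoid it --- it merely relabels it and hides it in the phrase ``one verifies.'' If you simply mollify $Y$ and differentiate $Y_\eps\circ\Phi_t^X$ in $t$, you obtain $DY(\Phi_t^X)X(\Phi_t^X)$ in one step and the extra appeal to $T_t[X,X]\equiv 0$ and Lemma~\ref{lemmag} becomes superfluous. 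Aside from that, your sketch of passing from the distributional identity $\partial_t g_t = DX(\Phi_t^X)g_t$ to a pointwise-in-$z$ ODE via ``Fubini'' glosses over the density-of-test-functions argument the paper spells out (cf.~the discussion around \eqref{eqn:beta}), but that is a reasonable omission in a sketch.
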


It is clear that Theorem~\ref{thm:main} follows from Propositions~\ref{prop:1} and~\ref{eqcond}. We devote the following sections to prove the various implications stated in these propositions.

%

\subsection{{Proof of \texorpdfstring{$\Phi_t^X\circ\Phi_s^Y = \Phi_s^Y\circ\Phi_t^X \Longleftrightarrow T_{t,s} \equiv 0$}{First}}}\label{sec:i-ii}

We claim that the fact that $\Phi_t^X\circ\Phi_s^X = \Phi_s^Y\circ\Phi_t^X$ for every $t,s \in \R$ is equivalent to say that for a.e. $z$ the function $s\to \Phi_t^X\circ\Phi_s^Y(z)$ is absolutely continuous and
\begin{equation}
\label{eqn:equiv-ode}
\partial_s(\Phi_t^X\circ\Phi_s^Y) = Y(\Phi_t^X\circ \Phi_s^Y) \qquad \mbox{for a.e. }s\in \R.
\end{equation}
Indeed, let us assume that $\Phi_t^X\circ\Phi_s^X = \Phi_s^Y\circ\Phi_t^X$ for every $t,s$. Then for every $t$ and a.e. $z$ the function $s\to \Phi_t^X\circ\Phi_s^Y(z)$ is absolutely continuous and we can write
\[
\partial_s(\Phi_t^X\circ\Phi_s^Y) = \partial_s(\Phi_s^Y\circ\Phi_t^X) = Y(\Phi_s^Y\circ\Phi_t^X) =  Y(\Phi_t^X\circ\Phi_s^Y) \qquad \mbox{for a.e. }s\in \R,
\]
 On the other hand, suppose that in the weak sense \eqref{eqn:equiv-ode} holds. We observe that $\Phi_t^X\circ\Phi_s^Y\circ\Phi_{-t}^X(z)$ has bounded density with respect to the Lebesgue measure and it coincides with $z$ at $s=0$ for a.e. $z$. Evaluating  \eqref{eqn:equiv-ode} at $\Phi_{-t}^X(z)$ we deduce that $\Phi_t^X\circ\Phi_s^Y\circ\Phi_{-t}^X(z)$ coincides with the unique Regular Lagrangian flow of $Y$, namely with $\Phi_s^Y(z)$.
\\
\\
We use this observation in the following way. Equation \eqref{eqn:equiv-ode} means, for any $\varphi \in C^\infty_c(\R^{n + 1})$,
\[
-\int_{\R^{n + 1}}(\partial_s \varphi(s,z),\Phi_t^X\circ\Phi_s^Y)dsdz = \int_{\R^{n + 1}}(Y(\Phi_t^X\circ \Phi_s^Y),\varphi(s,z))dsdz.
\]
Making the change of variable $a = \Phi_s^Y(z)$, we write
\begin{equation}\label{change}
-\int_{\R^{n + 1}}(\partial_s\varphi(s,\Phi_{-s}^Y(a)),\Phi_t^X(a))\xi(-s,a)dsda = \int_{\R^{n + 1}}(Y(\Phi_t^X\circ \Phi_s^Y),\varphi(s,z))dsdz.
\end{equation}
We can compute
\[
\partial_s(\varphi(s,\Phi_{-s}^Y(a))) = \partial_s\varphi(s,\Phi_{-s}^Y(a)) - D\varphi(s,\Phi_{-s}^Y(a))Y\circ \Phi^Y_{-s}(a).
\]
Furthermore, using \eqref{weakLi}, we infer
\[
\int_{\R^{n + 1}}(\partial_s(\varphi(s,\Phi_{-s}^Y(a))),\Phi_t^X(a))\xi(-s,a)dsda = \int_{\R^{n + 1}}(\varphi(s,\Phi_{-s}^Y(a)),\Phi_t^X(a))\dv(Y)\circ \Phi_{-s}^Y\xi(-s,a)dsda.
\]
Exploiting the last two equalities, \eqref{change} rewrites as
\begin{align*}
-\int_{\R^{n + 1}}&(D\varphi(s,\Phi_{-s}^Y(a))Y\circ \Phi^Y_{-s}(a),\Phi_t^X(a))\xi(-s,a)dsda \\
&- \int_{\R^{n + 1}}(\varphi(s,\Phi_{-s}^Y(a)),\Phi_t^X(a))\dv(Y)\circ \Phi_{-s}^Y\xi(-s,a)dsda = \int_{\R^{n + 1}}(Y(\Phi_t^X\circ \Phi_s^Y),\varphi(s,z))dsdz.
\end{align*}
We can now change back to the variable $z = \Phi_{-s}^Y(a)$ in the previous equation to find
\begin{align*}
-\int_{\R^{n + 1}}&(D\varphi(s,z)Y,\Phi_t^X\circ \Phi_s^Y)dsdz \\
&- \int_{\R^{n + 1}}(\varphi(s,z),\Phi_t^X\circ \Phi_s^Y)\dv(Y)dsdz = \int_{\R^{n + 1}}(Y(\Phi_t^X\circ \Phi_s^Y),\varphi(s,z))dsdz.
\end{align*}
Now considering functions of the form $\varphi(s,z) = \alpha(s)\beta(z)$, the previous equality reads as
\begin{equation}\label{ann}
\int_\R\alpha(s)\int_{\R^n}\left[(D\beta(z)Y,\Phi_t^X\circ \Phi_s^Y) + (Y(\Phi_t^X\circ \Phi_s^Y),\beta(z)) + \dv(Y)(\Phi_t^X\circ \Phi_s^Y,\beta(z))\right]dzds = 0.
\end{equation}
The latter is equivalent to say that for every $\beta \in C^\infty_c(\R^n,\R^n)$, $t \in \R$ and a.e. $s \in \R$,
\begin{equation}\label{betazer}
\int_{\R^n}(D\beta(z)Y,\Phi_t^X\circ \Phi_s^Y) + (Y(\Phi_t^X\circ \Phi_s^Y),\beta(z))dz =- \int_{\R^n}\dv(Y)(\Phi_t^X\circ \Phi_s^Y,\beta(z))dz,
\end{equation}
Notice that the set of $s$ on which the latter may not hold a priori depends on $\beta$, but this dependence can be eliminated by taking a dense countable subset of smooth test functions. In order to see that \eqref{betazer} is valid for every $s \in \R$, and not only up to a set of null measure, 
we observe that the left hand side is absolutely continuous  in $s$, as can be seen changing again variable $a = \Phi_s^Y(z)$ and using the dominated convergence theorem. Moreover, also the right-hand side is continuous in $s$, since for every $\eps>0$ it can be rewritten as  
$$ \int_{\R^n}\dv(Y)(\Phi_t^X\circ \Phi_s^Y,\beta(z))dz =
 \int_{\R^n}\dv(Y)((\Phi_t^X \ast \rho_\eps )\circ \Phi_s^Y,\beta(z))dz +  \int_{\R^n}\dv(Y)(( \Phi_t^X - \Phi_t^X \ast \rho_\eps )\circ \Phi_s^Y,\beta(z))dz 
 $$
 and the first term is continuous in $s$, while the second goes to $0$ as $\eps \to 0$ (locally) uniformly in $s$.
Hence \eqref{betazer} is valid for every $s$. We conclude by saying that $\eqref{betazer}$ is equivalent to
$T_{t,s}\equiv 0.$

\subsection{Proof of \texorpdfstring{$T_{t}\equiv 0 \Longleftrightarrow T_{t,s}\equiv 0$}{Second}}
We first observe that, if  $T_{t,s}\equiv 0$ and we evaluate it at $s=0$, we find that $T_t \equiv 0$. This observation, together with the equivalence proved in Section~\ref{sec:i-ii} and applied with $X=Y$, implies in particular that \eqref{YY} holds.
\\
\\
To prove the opposite implication, we start with the fact that $T_s[Y,Y] = 0$, that is formula \eqref{YY}, namely
\[
\dv(\Phi_s^Y\otimes Y) = Y\circ\Phi_s^Y + \dv Y\Phi_s^Y
\]
in the sense of distributions.
Now, Lemma \ref{lemmag} tells us that for any $g \in C^1_c(\R^n)$,
\begin{equation}\label{renorm}
\dv(g(\Phi_s^Y)\otimes Y) = Dg\circ\Phi_s^Y(Y\circ\Phi_s^Y + \dv Y\Phi_s^Y) - \dv Y(Dg(\Phi_s^Y)\Phi_s^Y - g(\Phi_s^Y))  = Dg\circ\Phi_s^YY\circ\Phi_s^Y + \dv Y g(\Phi_s^Y),
\end{equation}
that has again to be intended in distributional sense. If the flow $\Phi_t^X$ was smooth in space, we would consider $g = \Phi_t^X$; the above would become exactly $T_{t,s} \equiv 0$, once we exploit $T_t \equiv 0$ to write $D\Phi_t^X(\Phi_s^Y)Y(\Phi_s^Y) = Y(\Phi_t^X\circ\Phi_s^Y)$. This choice for $g$ is however not allowed since in the expression \eqref{renorm} the gradient of $g$ appears and flows of Sobolev vector fields need not have any weak derivative. Therefore, we fix $\rho \in C^\infty_c(B_1)$ and let $\rho_\eps$ be the associated convolution kernels. We choose as $g$ a mollification of the flow
\[
g(z) = \Phi_t^X\star \rho_\eps,
\]
to rewrite \eqref{renorm} as
\begin{equation}
\label{eqn:eps}
\dv((\Phi_t^X\star \rho_\eps)\circ\Phi_s^Y\otimes Y) = D(\Phi_t^X\star \rho_\eps)\circ\Phi_s^YY\circ\Phi_s^Y + \dv Y \,(\Phi_t^X\star \rho_\eps)\circ\Phi_s^Y.
\end{equation}
We want to take the limit as $\eps \to 0$ in the previous equality. It is immediate to see, by writing explicitely the weak form of the above equation, that in the sense of distributions
\[
\dv((\Phi_t^X\star \rho_\eps)\circ\Phi_s^Y\otimes Y) \to \dv(\Phi_t^X\circ\Phi_s^Y\otimes Y),\quad \text{ for }\eps \to 0,
\]
and strongly in $L^1$
\[
\dv Y \,(\Phi_t^X\star \rho_\eps)\circ\Phi_s^Y \to \dv Y \,\Phi_t^X\circ\Phi_s^Y,\quad \text{ for }\eps \to 0.
\]
Therefore, we need to prove that weakly in $L^1$
\begin{equation}\label{weakcon}
D(\Phi_t^X\star \rho_\eps)Y \rightharpoonup Y(\Phi_t^X) \qquad \mbox{ weakly in $L^1$ as } \eps \to 0.
\end{equation}
This in turn implies that
\[
D(\Phi_t^X\star \rho_\eps)\circ\Phi_s^YY\circ\Phi_s^Y \rightharpoonup Y(\Phi_t^X\circ \Phi_s^Y)
\]
and hence we can take the limit in the distributional formulation of \eqref{eqn:eps} to get that $T_{t,s} \equiv 0$.
\\
\\
Take any $f \in L^{p'}(\R^n,\R^n)$, where $\frac{1}{p'}+\frac{1}{p} = 1$. We can write
\begin{align*}
\int_{\R^n}(f(z),D(\Phi_{t}^X&\star\rho_\eps)(z)Y(z))dz = \int_{\R^n}\int_{\R^n}(f(z),\Phi_{t}^X(z - y))(D\rho_\eps(y),Y(z))dzdy\\
&= \int_{\R^n}\int_{\R^n}(f(z),\Phi_{t}^X(z - y))(D\rho_\eps(y),Y(z - y))dzdy \\
&+ \int_{\R^n}\int_{\R^n}(f(z),\Phi_{t}^X(z - y))(D\rho_\eps(y),Y(z)-Y(z - y))dzdy.
\end{align*}
Now we prove separately that
\begin{equation}\label{primo}
\lim_{\eps\to 0} \int_{\R^n}\int_{\R^n}(f(z),\Phi_{t}^X(z - y))(D\rho_\eps(y),Y(z - y))dzdy =  \int_{\R^n}(f(z),Y\circ \Phi_{t}^X)dz + \int_{\R^n}\dv Y(f,\Phi_t^X)dz
\end{equation}
and
\begin{equation}\label{secondo}
 \lim_{\eps\to 0}\int_{\R^n}\int_{\R^n}(f(z),\Phi_{t}^X(z - y))(D\rho_\eps(y),(Y(z)-Y(z - y)))dzdy =  -\int_{\R^n}\dv Y(f,\Phi_t^X)dz,
\end{equation}
and this concludes the proof. To show \eqref{primo}, we exploit $T_t \equiv 0$ to write
\begin{align*}
 \int_{\R^n}\int_{\R^n}(f(z),\Phi_{t}^X(z - y))(D\rho_\eps(y),Y(z - y))dzdy 
&= \int_{\R^n}\int_{\R^n}(f(z),\Phi_{t}^X(y))(D\rho_\eps(z-y),Y(y))dydz\\
&=\int_{\R^n}(D(f\star\rho_\eps)(y)Y(y),\Phi_{t}^X(y))dy\\
& \overset{T_t\equiv 0}{=} \int_{\R^n}(Y\circ\Phi_{t}^X,f\star\rho_\eps)dy + \int_{\R^n}\dv Y(\Phi_{t}^X,f\star\rho_\eps)dy,
\end{align*}
and this proves \eqref{primo}. Next, to show \eqref{secondo}, we introduce the function
\[
F(z,y) \doteq  \frac{Y(z)-Y(z - y) - (DY(z),y)}{|y|},
\]
in order to write
\begin{equation}\label{lastsplit}
\begin{split}
 \int_{\R^n}\int_{\R^n}(f(z),\Phi_{t}^X(z - y))(D\rho_\eps(y),Y(z)-Y(z - y))dzdy &=  \int_{\R^n}\int_{\R^n}(f(z),\Phi_{t}^X(z - y))(|y|D\rho_\eps(y),F(z,y))dzdy\\
&+ \int_{\R^n}\int_{\R^n}(f(z),\Phi_{t}^X(z - y))(D\rho_\eps(y),DY(z)y)dzdy.
\end{split}
\end{equation}
The first addendum is bounded by
\begin{align*}
\int_{\R^n}\int_{\R^n}|f(z)||\Phi_{t}^X(z - y))||y||D\rho_\eps(y)||F(z,y)|dzdy &\le
 \|\Phi_{t}^X\|_\infty\|f\|_{p'}\int_{B_\eps}\|y\|\|D\rho_\eps(y)\|\|F(\cdot,y)\|_{p}dy.
\end{align*}
Now, since $Y \in W^{1,p}$, for every $\alpha >0$, we can find $\eps>0$ such that
\[
\|F(\cdot,y)\|_{p} \le \alpha
\]
provided $\|y\|\le \eps$. Since
\begin{equation}\label{equiboundL}
\int_{\R^n}\|y\|\|D\rho_\eps(y)\|dy \le L
\end{equation}
for $L > 0$ depending only on $\rho$ (but otherwise independent of $\eps$), this show that the first addendum converges to $0$ as $\eps \to 0$. To estimate the second, we first observe that
\[
\int_{\R^n}(D\rho_\eps(y),DY(z)y)dy = \sum_{ij}\int_{\R^n}\partial_i\rho_\eps(y)\partial_{i}Y^jy_jdy = -\sum_{ij}\int_{\R^n}\rho_\eps(y)\partial_{i}Y^j\delta_{ij}dy = -\dv Y(z)
\]
if $\delta_{ij}$ denotes Kronecker's delta. This observation lets us rewrite the second addendum of \eqref{lastsplit} as
\begin{align*}
 \int_{\R^n}\int_{\R^n}(f(z),&\Phi_{t}^X(z - y))(D\rho_\eps(y),DY(z)y)dzdy =  \\
&\int_{\R^n}\int_{\R^n}(f(z),\Phi_{t}^X(z - y) - \Phi_{t}^X(z))(D\rho_\eps(y),DY(z)y)dzdy - \int_{\R^n}\dv Y(f,\Phi_{t}^X)dz
\end{align*}
and, similarly as above, we estimate the first term as
\begin{align}
\label{eqn:conver}
\int_{\R^n}\int_{\R^n}|f(z)||\Phi_{t}^X(z - y) &- \Phi_{t}^X(z)||D\rho_\eps(y)||y||DY(z)|dzdy= \int_{\R^n}g_\eps(z)|f(z)||DY(z)|dz,
\end{align}
where
\[
g_\eps(z) \doteq \int_{\R^n}|\Phi_{t}^X(z - y) - \Phi_{t}^X(z)||D\rho_\eps(y)||y|dy.
\]
We see that
\begin{align*}
\int_{\R^n}g_\eps(z)dz &= \int_{\R^n}\int_{\R^n}|\Phi_{t}^X(z - y) - \Phi_{t}^X(z)||D\rho_\eps(y)||y|dydz \\
&= \int_{\R^n}|D\rho_\eps(y)||y|\left(\int_{\R^n}|\Phi_{t}^X(z - y) - \Phi_{t}^X(z)|dz\right)dy\\
&= \int_{B_\eps(0)}|D\rho_\eps(y)||y|\left(\int_{\R^n}|\Phi_{t}^X(z - y) - \Phi_{t}^X(z)|dz\right)dy.
\end{align*}
Using the continuity in the $L^1$ topology of the translation operation $y \mapsto \Phi_t^X(\cdot-y)-\Phi_t^X(\cdot)$, and \eqref{equiboundL}, we easily see that $\|g_\eps\|_{L^1}\to 0$ as $\eps \to 0$. To show that the quantity in \eqref{eqn:conver} converges to $0$,
it suffices to show that for any subsequence $g_{\eps_n}$ that convergence pointwise a.e. to $0$, one has
\[
\lim_{n\to \infty}\int_{\R^n}g_{\eps_n}(z)|f(z)||DY(z)|dz \to 0.
\]
The latter is an easy consequence of the dominated convergence theorem, that we can apply since by {\eqref{equiboundL}} for a.e. $z \in \R^n$ we have
$|g_\eps(z)|{\le} 2L\|\Phi_t^X\|_\infty$. This concludes the proof.


\subsection{Proof of \texorpdfstring{\eqref{eqn:deriv-brack}}{Third} and conclusion}

For any test function $\varphi$, we can differentiate in $t$ inside the integral sign in \eqref{distweak} to get
\[
\frac{d}{dt}T_t = \int_{\R^n}(DY(\Phi^X_t(z))X\circ\Phi_t^X,\varphi(z)) + (X\circ\Phi_t^X,D\varphi Y) + \dv(Y)(\varphi,X\circ\Phi_t^X)dz.
\]
Evaluating the previous expression at $t = 0$, we get
\begin{equation}\label{derzero}
\left.\frac{d}{dt}\right|_{t = 0}T_t[X,Y](\varphi) = \int_{\R^n}(DYX,\varphi) + (X,D\varphi Y) + \dv(Y)(\varphi,X)dz.
\end{equation}
Integrating the expression
\begin{align*}
(X(z),D\varphi Y)  &= \sum_{i,j}X^i\partial_j\varphi^iY^j 
= \sum_{i,j}\partial_j(X^i\varphi^iY^j) - \sum_{i,j}\partial_jX^i\varphi^iY^j -\sum_{i,j}X^i\varphi^i\partial_jY^j,
\end{align*}
 we obtain that 
\[
\int_{\R^{n}}(X,D\varphi Y)dz =-\int_{\R^n} (DXY,\varphi)- \int_{\R^n}(X,\varphi)\dv(Y)dz.
\]
Substituting the latter into \eqref{derzero}, we finish the proof.


\subsection{Proof of Corollary~\ref{chainrule}} 
let us study the directional differentiability of $\Phi_t^X$. First, we need the following technical lemma. 
\begin{lemma}\label{ra}
Let $f$ be approximately differentiable a.e., with approximate differential $df(z)$, let $p \ge 1$, $Y \in W^{1,p}_{\rm loc}\cap L^\infty(\R^n,\R^n)$ be a vector field with bounded divergence and let $\Phi_t^X,\Phi_s^Y$ be its Regular Lagrangian Flow. Then, the family of functions
\[
F_h(z) \doteq \frac{|f(\Phi^Y_h(z))-f(z)-h(df(z),Y(z))|}{|h|}
\]
converges in measure to $0$ on compact sets as $h \to 0$, i.e. for every $\eps > 0$ and for every $R > 0$, we have
\[
|\{z\in B_R(0):F_h(z) > \eps\}| \to 0, \text{ as }h\to 0.
\]
\end{lemma}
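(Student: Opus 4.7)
The plan is to reduce the claim to the case where $f$ is Lipschitz and then invoke an integral representation of $g\circ\Phi^Y$ along the flow. By a classical result of Federer (the Lusin--Lipschitz property of approximately differentiable functions), for every $\sigma>0$ there exist a Lipschitz function $g=g_\sigma$ and a Borel set $E=E_\sigma$ with $|E|<\sigma$ such that $f=g$ and $df=dg$ on $\R^n\setminus E$. I will use this to split the superlevel set $\{z\in B_R : F_h(z)>\eps\}$ into a "bad" piece controlled by $\sigma$ and a "good" piece controlled by the analogue $G_h$ of $F_h$ built out of $g$.

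Fix $R,T>0$ and, for $|h|\le T$, set $A_h=(E\cap B_R)\cup\{z\in B_R:\Phi_h^Y(z)\in E\}$. Using the change of variables identity \eqref{density} and the $L^\infty$ bound \eqref{boundxi} on the density $\xi$ associated to $\Phi^Y$, the second set has measure at most $e^{T\|\dv Y\|_\infty}|E|$, hence $|A_h|\le C_T\sigma$ for a constant $C_T$ depending only on $T$ and $\|\dv Y\|_\infty$. On $B_R\setminus A_h$ we have $F_h(z)=G_h(z)$, so it is enough to prove that, for each fixed Lipschitz $g$, $G_h\to 0$ in $L^1(B_R)$ as $h\to 0$.

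For this, the Lipschitzianity of $g$ and of $s\mapsto \Phi_s^Y(z)$ (Lipschitz in $s$ with slope $\|Y\|_\infty$), together with the chain rule and a Fubini argument on the null set of non-differentiability of $g$ (whose preimage under $(s,z)\mapsto\Phi_s^Y(z)$ has zero Lebesgue measure by \eqref{density} and \eqref{boundxi}), yield the integral representation
\[
g(\Phi_h^Y(z))-g(z)=\int_0^h dg(\Phi_s^Y(z))\,Y(\Phi_s^Y(z))\,ds,\qquad\text{for a.e. } z.
\]
Writing $H_s(z)\doteq dg(\Phi_s^Y(z))\,Y(\Phi_s^Y(z))\in L^\infty$, one has $|G_h(z)|\le \tfrac{1}{|h|}\int_0^{|h|}|H_s(z)-H_0(z)|\,ds$ and hence $\|G_h\|_{L^1(B_R)}\le \sup_{0\le s\le |h|}\|H_s-H_0\|_{L^1(B_R)}$. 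To show the right-hand side vanishes as $h\to 0$, I approximate $H_0$ in $L^1_{\loc}$ by a continuous, compactly supported $\psi_k$, use $\Phi_s^Y\to\id$ in $L^1_{\loc}$ (hence in measure) to get $\|\psi_k\circ\Phi_s^Y-\psi_k\|_{L^1(B_R)}\to 0$ for each fixed $k$, and close the triangle inequality by bounding the change-of-variables error via \eqref{boundxi}. Combining the two previous steps, for any $\eps>0$
\[
|\{z\in B_R:F_h(z)>\eps\}|\le |A_h|+\tfrac{1}{\eps}\|G_h\|_{L^1(B_R)}\le C_T\sigma+o_{h\to 0}(1),
\]
and sending first $h\to 0$ and then $\sigma\to 0$ yields convergence in measure on compact sets.

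I expect the main technical point to be the justification of the integral representation for Lipschitz $g$ along the Regular Lagrangian Flow $\Phi^Y$: one must ensure that the null set of non-differentiability of $g$ is avoided by $s\mapsto\Phi_s^Y(z)$ for a.e.\ $s$ and a.e.\ $z$, which is where the $L^\infty$ control on the density $\xi$ is essential. The $L^1_{\loc}$ continuity of $s\mapsto H_s$ at $s=0$ is the other subtlety, bypassed by density approximation and the same bound on $\xi$.
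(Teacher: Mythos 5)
Your proof is correct but takes a genuinely different route from the paper's. Both decompose $\{z\in B_R: F_h(z)>\eps\}$ into a small ``bad'' set coming from a Lusin-type approximation of $f$ and a ``good'' set controlled via a regular surrogate, but the paper uses the $C^1$ Lusin property (there are $C^1$ functions $f_n$ equal to $f$ on large sets $A_n$), bounds $F_h$ pointwise on $A_n$ by $\fint_0^h \|Df_n(\Phi_\tau^Y)Y(\Phi_\tau^Y)-Df_n\,Y\|\,d\tau$, splits this into a $\|Df_n(\Phi_\tau^Y)-Df_n\|$ piece (handled by the uniform continuity of $Df_n$ together with the uniform-in-time bound on $|\Phi_\tau^Y(z)-z|$) and a $\|Y(\Phi_\tau^Y)-Y\|$ piece (estimated in $L^1(B_R)$ via mollification, the $W^{1,p}$ regularity of $Y$, and \eqref{boundxi}), and concludes by Chebyshev. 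You instead invoke Federer's Lipschitz Lusin property, estimate $\|G_h\|_{L^1(B_R)}$ directly by $\sup_{|s|\le|h|}\|H_s-H_0\|_{L^1(B_R)}$ with $H_s=(dg\,Y)\circ\Phi_s^Y$, and obtain the required continuity at $s=0$ by density of continuous functions plus \eqref{boundxi}. Your route sidesteps the paper's two-term split and the Sobolev estimate on $\fint_0^h\|Y(\Phi_\tau^Y)-Y\|$ (only $Y\in L^\infty$ with bounded divergence enters the good-set step), at the cost of justifying the chain rule for the merely Lipschitz $g$ along the Regular Lagrangian Flow, which you handle correctly by a Fubini argument on the null set of non-differentiability of $g$ using the boundedness of the density $\xi$. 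Two cosmetic caveats: the exceptional set $E$ should be taken of small measure inside the bounded window $B_{R+T\|Y\|_\infty}$ rather than globally (no global integrability of $f$ is assumed); and $\Phi_s^Y\to\id$ in fact uniformly on compacts, from $|\Phi_s^Y(z)-z|\le|s|\|Y\|_\infty$, which is stronger than the $L^1_{\loc}$ convergence you invoke and makes the final approximation step immediate. Neither affects the validity of the argument.
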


\begin{proof}
Let us fix $\eps,R > 0$. By the definition of approximate differentiabily, we find a sequence of Borel sets $A_n$ and a sequence of $C^1$ functions $f_n:\R^n\to \R$ such that
\begin{equation}
\label{eqn:def-fn}
|B_R\setminus A_n| \le \frac{1}{n} \quad\text{ and } \quad f|_{A_n}=f_n.
\end{equation}
We wish to show that, fixed $\gamma > 0$, there exists a $\delta > 0$ such that if $|h|\le \delta$, then
\[
|\{z \in B_R(0): F_h(z) > \eps\}|\le \gamma.
\]
To do so, we split the set according to the fact that $z$ and $\Phi^Y_h(z) $ belong to $A_n$ or not:
\begin{align*}
\{z \in B_R: F_h(z) > \eps\} 
&\subseteq (B_R\setminus A_n)\cup\Phi^Y_{-h}( B_R\setminus A_n)
\\
&\quad\quad\quad\quad\quad\quad\quad\quad\quad 
\cup \{z \in A_n: F_h(z) > \eps \text{ and } \Phi^Y_h(z) \in A_n\}\\
\end{align*}
The measure of the first two sets is small: indeed, the flow is measure preserving and by \eqref{eqn:def-fn}, we have that for every $n$ sufficiently large
\begin{align}\label{eqn:n-choice}
| B_R\setminus A_n| + |\Phi^Y_{-h(B_R\setminus A_n)}| \le C(Y) |B_R\setminus A_n| \le \frac{C(Y)}{n} \leq \frac{\gamma}{2}
\end{align}
We can therefore choose $n \in \N$ in such a way that \eqref{eqn:n-choice} holds
and from now on we consider it fixed. Finally, we need to estimate $|E|$, where
\[
E\doteq \{z \in A_n: F_h(z) > \eps \text{ and } \Phi^Y_h(z) \in A_n\}. 
\]
Since $f_n$ is $C^1(\R^n)$, and the estimate
\[
\|\Phi^Y_\tau(z) - z\|\lesssim {\|Y\|_\infty}\tau
\]
holds, we obtain that the $L^\infty$ norm of the flow is bounded uniformly for $\tau \leq 1$ and 
we infer the existence of $\delta_1 > 0$ such that
\begin{equation}
\sup_{\tau:|\tau|\in (0,\delta_1)}\sup_{z \in B_{2R}(0)}\|Df_n(\Phi^Y_\tau(z)) - Df_n(z)\|\le \frac{\eps}{2\|Y\|_{L^\infty(B_R)}}.
\end{equation}
Moreover, it is a classical fact that for a.e. $z \in A_n$, we have $df(z) = Df_n(z)$. Hence, if we consider any $z$ for which $df(z) = Df_n(z)$ and any $h$ for which $|h| \le \delta_1$, we can write
\begin{equation}\label{F}
\begin{split}
F_h(z) &= \frac{\|f(\Phi_h^Y(z))-f(z)-h(df(z),Y(z))\|}{|h|} =\frac{\|f_n(\Phi_h^Y(z))-f_n(z)-h(Df_n(z),Y(z))\|}{|h|}\\
& \le \fint_0^h \|Df_n(\Phi_\tau^Y(z))Y(\Phi_\tau^Y(z))-Df_n(z)Y(z)\|d\tau \\
&\le\fint_0^h \|Y\|_{L^\infty(\Phi_\tau^Y(B_R))}\|Df_n(\Phi_\tau^Y(z)) -Df_n(z)\| + \|Df_n\|_{L^\infty(B_R)}\fint_0^h\|Y(\Phi_\tau^Y(z))-Y(z)\|d\tau\\
&\le\|Y\|_{L^\infty(B_{(1 +C)R})}\fint_0^h \|Df_n(\Phi_\tau^Y(z)) -Df_n(z)\| + \|Df_n\|_{L^\infty(B_R)}\fint_0^h\|Y(\Phi_\tau^Y(z))-Y(z)\|d\tau\\
&\le \frac{\eps}{2} + \|Df_n\|_{L^\infty(B_R)}\fint_0^h\|Y(\Phi_\tau^Y(z))-Y(z)\|d\tau.
\end{split}
\end{equation}

We denote by $Y_\eps \doteq Y\star \rho_\eps$ the  mollification of $Y$. We have, using the Lipschitz regularity of the flow with respect to the time variable,
\begin{align*}
\int_{B_R}\fint_0^h\|Y_\eps(\Phi^Y_\tau(z))-Y_\eps(z)\|d\tau dz &= \int_{B_R}\fint_0^h\left\|\int_{0}^\tau DY_\eps(\Phi^Y_s(z))Y_\eps(\Phi^Y_s(z))ds\right\|d\tau dz\\
&\le \fint_0^hd\tau\int_{0}^\tau ds\int_{B_R(0)}\|DY_\eps(\Phi^Y_s(z))Y_\eps(\Phi^Y_s(z))\|dz.
\end{align*}
Now we perform the change of variable $\Phi_s^Y(z) = a$ to estimate
\begin{align*}
\int_{B_R}\fint_0^h\|Y_\eps(\Phi^Y_\tau(z))-Y_\eps(z)\|d\tau dz &\lesssim_{\|\xi\|_\infty} \fint_0^hd\tau\int_{0}^\tau ds\int_{\Phi_s^Y(B_R)}\|DY_\eps(a)Y_\eps(a)\|dz\\
&\le h\|Y_\eps\|_{L^\infty(B_{(1+C)R})}\|DY_\eps\|_{L^1(B_{(1+C)R})} \le h\|Y\|_{L^\infty(B_{(1+C)R})}\|DY\|_{L^1(B_{(1+C)R})}.
\end{align*}
Recall that the $\xi$ appearing in the previous chain of inequalities is the density of the Regular Lagrangian flow introduced in \eqref{density}. 
Now, finally, letting $\eps \to 0$, by Fatou's Lemma we find
\[
\int_{B_R}\fint_0^h\|Y(\Phi^Y_\tau(z))-Y(z)\|d\tau dz \le h\|Y\|_{L^\infty}\|DY\|_{L^1}.
\]
Now Chebyschev inequality implies that for every $\alpha>0$ 
\begin{equation}\label{claim}
E'\doteq \left\{z\in B_R: \fint_0^h\|Y(\Phi_\tau^Y(z))-Y(z)\|d\tau > \alpha\right\}\lesssim_{\|Y\|_{W^{1,p}\cap L^\infty}}\frac{h}{\alpha}.
\end{equation}


\noindent We choose
\[
\alpha \doteq \frac{\eps}{2\|Df_n\|_{L^\infty(B_R)}},
\]
so that by \eqref{F} we have $E\subseteq E'$, and we choose $\delta_2 > 0$ such that if $|h| \le \delta_2$,
\[
|E| \leq |E'|\le \frac{\gamma}{2}.
\]
This concludes the proof.
\end{proof}

We now prove Corollary~\ref{chainrule}. {By the group property of the flow and \eqref{boundxi}, it is sufficient to show that $\Delta_h(\cdot,0,\cdot)$ weakly converges to $Y\circ \Phi_{\cdot}^X(\cdot)$.
To show this, we first prove that for any $q, R,T$, $\|\Delta_h(\cdot,0,\cdot)\|_{L^q([-T,T]\times B_R)}$ is equibounded independently of $h$. We assume, without loss of generality, $q > 1$. We use the hypothesis that the flows commute and the fact that all curves $h\mapsto \Phi_h^Y(a)$ solve 
$ \partial_h\Phi_h^Y(a) = Y(\Phi_h^Y(a))$ and are $\|Y\|_\infty$-Lipschitz to estimate
\begin{align*}
\int_{-T}^T\int_{B_R}\left|\Delta_h(t,0,z)\right|^qdzdt &= \int_{-T}^T\int_{B_R}\left|\frac{\Phi_t^X(\Phi_{h}^Y(z))-\Phi_t^X(z)}{h}\right|^qdzdt \\
&= \int_{-T}^T\int_{B_R}\left|\frac{\Phi_h^Y(\Phi_{t}^X(z))-\Phi_t^X(z)}{h}\right|^qdzdt\le \|Y\|^q_\infty 2T|B_R|.
\end{align*}
We thus infer the existence of a sequence $h_j \to 0$, as $j \to \infty$, and  of a function $g \in L^\infty(\R\times \R^n)$ such that $\Delta_{h_j}(\cdot,0,\cdot)\overset{L^q}{\rightharpoonup}f(\cdot,\cdot)$, locally as above as $j \to \infty$. Now testing against any $C^1_c(\R^{n + 1},\R^n)$ vector-field $\varphi$ and again using the commutativity of the flows, we see that for each $h_j$,
\[
\int_{\R^{n + 1}}(\Delta_{h_j}(t,0,z),\varphi(t,z))dzdt = \int_{\R^{n + 1}}\left(\frac{\Phi_{h_j}^Y(\Phi_{t}^X(z))-\Phi_t^X(z)}{{h_j}},\varphi(t,z)\right)dzdt.
\]
Passing to the limit and exploiting the arbitrarity of $\varphi$, we deduce $g = Y\circ \Phi_t$. In particular, the limit does not depend on the particular subsequence, and we infer that $\Delta_h(\cdot,0,\cdot) \overset{L^q}{\rightharpoonup} Y\circ \Phi_t$ as $h \to 0$. From Proposition~\ref{prop:1}(3) it then follows that the weak derivative $\Phi_t^X$ in direction $Y$ coincides with $Y(\Phi_t^X(z))$. In the case $p > 1$, we exploit \cite[Corollary 2.5]{DLC} to say that $z\mapsto \Phi_t^X(z)$ is approximately differentiable a.e. for every $t\in\R$, combined with Lemma \ref{ra}, to infer that for every $\alpha > 0$ and $R > 0$,
\[
|\{z\in B_R: |\Delta_h(t,0,z) -d\Phi_t^X(z)Y(z)|>\alpha\}|\to 0
\]
as $h \to 0$. This fact and boundedness in $L^q_{\loc}$ proved above is enough to conclude the $L^q_{\loc}$ strong convergence of $\Delta_h(t,0,z)$ to $d\Phi_t^X(z)Y(z)$. By uniqueness of the weak limit, we infer the identity $d\Phi_t^X(z)Y(z) = Y\circ \Phi_t^X(z)$.}
\subsection{
Proof of Proposition~\ref{eqcond}}
It suffices to show that 
$$f(t,z) = Y(\Phi_t^X).$$
We are going to prove that, for a.e. $z$, both $t \mapsto f(t,z) $ and $t \mapsto Y(\Phi_t^X)$ are absolutely continuous curves which solve the same ODE, linear in the unknown $A$,
\begin{equation}\label{pdemixed}
\partial_t A =DX(\Phi_t^X) A.
\end{equation}

We first observe that the curve $t\to Y(\Phi_t^X(z))$ is absolutely continuous for a.e. $z$ and its derivative is given by $DY\circ \Phi_t^XX\circ\Phi_t^X $, as can be seen by mollifying the vector fields $Y$ and $X$ and then passing to the limit by means of the stability of the Regular Lagrangian flow.
We use our assumption
$[X,Y](\Phi_t^X) = 0$ for every  $t
$ to infer that
\begin{equation}\label{commt}
DX(\Phi_t^X)Y(\Phi_t^X) = DY(\Phi_t^X)X(\Phi_t^X).
\end{equation}
Hence,  the curve $t \to Y(\Phi_t^X) $ is absolutely continuous and satisfies
\begin{align*}
\partial_t [Y(\Phi_t^X)] = DY\circ \Phi_t^XX\circ\Phi_t^X \overset{\eqref{commt}}{=}DX(\Phi_t^X)Y(\Phi_t^X).
\end{align*}
 To show that $f$ solves \eqref{pdemixed}, we can use directly DiPerna-Lions theory in the following way. We read \eqref{dist0} as
\[
\dv(\Phi_t^X\otimes Y) = f + \dv(Y) \Phi_t^X,
\]
for a.e. $t$. We can exploit once again Lemma \ref{lemmag}, to write, for every $g \in C^1_c(\R^n,\R^n)$
\begin{equation}\label{diplions}
\dv(g(\Phi_t^X)\otimes Y) = Dg(\Phi_t^X)(f + \dv(Y)\Phi_t^X) - \dv(Y)(Dg(\Phi_t^X)\Phi_t^X-g(\Phi_t^X)) = Dg(\Phi_t^X)f + \dv(Y)g(\Phi_t^X),
\end{equation}
that has to be intendend in the weak sense. A simple approximation procedure allows us to take $g \in W_{\loc}^{1,p}(\R^n,\R^n)$
. We can therefore choose $g = X$, hence in the distributional sense we get
\[
\dv(X(\Phi_t^X)\otimes Y) = DX(\Phi_t^X)f + \dv(Y)X(\Phi_t^X),
\]
i.e.
\begin{equation}\label{equality}
\int_{\R^n}(X(\Phi_t^X),D\varphi Y)dz = \int_{\R^n}(DX(\Phi_t^X)f + \dv(Y)X(\Phi_t^X),\varphi)dz,
\end{equation}
for any $\varphi \in C_c^\infty(\R^{n})$ and $t \in \R$.
Now observe that
\begin{equation}\label{derlip}
\frac{d}{dt}\int_{\R^n}(\Phi_t^X,D\varphi Y)dz = \int_{\R^n}(X\circ\Phi_t^X,D\varphi Y)dz,
\end{equation} 
and this can be interpreted either weakly or strongly, as the left-hand side is Lipschitz in the time variable. Therefore, we can write for any $\varphi \in C^\infty_c(\R^n),\alpha \in C^\infty_c(\R)$:
\begin{align*}
\int_{\R}\int_{\R^n}\alpha'(t)(f(t,z),\varphi)dzdt &\overset{\eqref{dist0}}{=}-\int_\R\alpha'(t)\int_{\R^n}(\Phi_t^X,D\varphi(z) Y)- \dv(Y)(\varphi, \Phi_t^X)dzdt\\
&\overset{\eqref{derlip}}{=} \int_\R\alpha(t)\int_{\R^n}(X\circ\Phi_t^X,D\varphi(z) Y) - \dv(Y)(\varphi, X\circ\Phi_t^X)dzdt\\
&\overset{\eqref{equality}}{=} \int_{\R}\alpha(t)\int_{\R^n}(DX(\Phi_t^X)f,\varphi(z))dzdt
\end{align*}
The latter chain of equality and the fact that $\varphi$ is arbitrary implies that for a.e. $z$
\begin{equation}
\label{eqn:beta}
-\int_\R \alpha'(t)f(t,z)dt=\int_\R\alpha(t)DX(\Phi_t^X)fdt.
\end{equation}
In principle, the set of $z$ for which the previous equality holds may depend on $\alpha$, but, as in \eqref{betazer}, we solve this issue by a standard argument: we obtain first \eqref{eqn:beta} for a dense family of smooth $\alpha$ and then extend it by continuity to $W^{1,1}$ functions $\alpha$. This clearly shows that $f$ is absolutely continous with respect to the time variable and that it fulfills \eqref{pdemixed}.
\\
\\
To conclude, we observe that by \eqref{pdemixed}, for a.e. $z$ the difference $v= f-Y(\Phi_t^X)$ solves the ODE $\partial_t v =DX(\Phi_t^X) v$, which implies that $\frac{d}{dt}\|v\|^2(t) = 2(v,v') \le 2\|DX(\Phi_t^X)\|\|v\|^2$. Moreover,
a simple integration in \eqref{dist0} by parts tells us that
$v(0)=f(0,z) - Y(z)$ for a.e. $z$.
 Finally, since $t \to \|DX(\Phi_t^X)(z)\|$ is integrable for a.e. $z$, we deduce by Gronwall Lemma that $v\equiv 0$.
%
%

\section{Some consequences of our equivalence}\label{EL}

Let us show some cases in which Theorem \ref{thm:main} applies. The first immediate case is the one of $X \in W^{1,\infty}_{\loc}(\R^n,\R^n)$, as already said in the introduction. In that case, $\Phi_t^X$ is locally Lipschitz and hence $D\Phi_t^X$ exists due to Rademacher Theorem. Another case, in dimension 2, is a consequence of the theory developed here and the result that will appear in the aforementioned forthcoming paper \cite{Mar20}, that we state here in a simplified way:

\begin{theorem}[E. Marconi,\cite{Mar20}]\label{Elio}
Let $p > 2$, $X \in L^\infty\cap W^{1,p}_{\loc}(\R^2,\R^2)$ with zero divergence and $X\neq 0$ everywhere on $\R^2$. Then, for every fixed $t \in \R$, $\Phi_t^X \in W^{1,p}_{\loc}(\R^2,\R^2)$, and $D\Phi_t^X \in L^\infty_{\loc}(\R,L_{\loc}^p(\R^2,\R^2))$. 
\end{theorem}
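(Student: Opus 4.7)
The plan is to exploit the Hamiltonian structure of two-dimensional divergence-free fields in order to conjugate the flow locally to a translation. Since $X$ is divergence-free on $\R^2$, there exists a stream function $H \in W^{2,p}_{\loc}(\R^2)$ with $X = \nabla^\perp H = (-\partial_2 H, \partial_1 H)$. Morrey's embedding $W^{2,p} \hookrightarrow C^{1,1-2/p}$, available because $p > 2$, makes $\nabla H$ continuous, and the hypothesis $X \neq 0$ everywhere forces $\nabla H$ to be nowhere zero. The identity $X \cdot \nabla H \equiv 0$ shows that $H$ is conserved along trajectories, so the foliation by level sets of $H$ is preserved by $\Phi_t^X$.

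The next step is to rectify this foliation. Near a point $x_0$ where, say, $\partial_2 H(x_0) \neq 0$, the map $\Psi(x_1, x_2) := (x_1, H(x_1, x_2))$ is a local $C^1$-diffeomorphism; by the implicit function theorem its inverse has the form $\Psi^{-1}(u, h) = (u, \psi(u,h))$ with $\psi \in W^{2,p}_{\loc}$, which follows from the identities $\partial_h \psi = 1/\partial_2 H$ and $\partial_u \psi = -\partial_1 H / \partial_2 H$ evaluated at $(u, \psi(u,h))$, together with the uniform lower bound on $|\partial_2 H|$ on compact sets. Conjugation then yields
\[
\Psi \circ \Phi_t^X \circ \Psi^{-1}(u_0, h) = (U(t; u_0, h), h), \qquad \frac{dU}{dt} = b(U, h),
\]
where $b(u, h) := -\partial_2 H(u, \psi(u,h)) \in W^{1,p}_{\loc}$ is continuous and bounded uniformly away from zero on compacts.

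The final step is a second rectification that trivializes this one-parameter family of one-dimensional ODEs. Fixing $u_*$, set
\[
G(u, h) := \int_{u_*}^u \frac{dv}{b(v, h)}.
\]
Since $1/b \in W^{1,p}_{\loc}$, both $\partial_u G = 1/b$ and $\partial_h G = -\int_{u_*}^u \partial_h b(v,h)/b(v,h)^2 \, dv$ lie in $L^p_{\loc}$, and $\partial_u G$ is bounded away from zero, so $(u, h) \mapsto (G(u,h), h)$ is a bi-Lipschitz homeomorphism in $u$ with Sobolev derivatives. In these new coordinates the flow becomes the translation $(s_0, h) \mapsto (s_0 + t, h)$. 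Composing the two rectifications, $\Phi_t^X$ is locally the composition of $\Psi^{-1}$, an inversion of $G$ in the first variable, a translation, and $G$ applied after $\Psi$; the Sobolev chain rule for compositions with bi-Lipschitz homeomorphisms yields $\Phi_t^X \in W^{1,p}_{\loc}(\R^2,\R^2)$ with $D\Phi_t^X$ bounded in $L^p_{\loc}$ uniformly for $t$ in compact time intervals. The main obstacle is verifying the Sobolev chain rule and the $W^{1,p}$ regularity of the inverse of $G$ in the $u$-variable at fixed $h$: the pointwise inverse formula $\partial_s G^{-1} = b$ is straightforward, but $\partial_h G^{-1}$ requires an $L^p$ estimate that comes from combining the $L^\infty$ lower bound $b \geq \delta > 0$ with the $L^p$-control of $\partial_h G$, typically made rigorous by mollifying $b$, applying the smooth chain rule, and passing to the limit with a uniform $L^p$ bound on the mollified derivatives.
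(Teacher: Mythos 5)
This theorem is not proved in the paper: it is quoted from Marconi's forthcoming work \cite{Mar20}, stated in a simplified form, so there is no internal argument of this paper to check your proposal against. Your rectification strategy -- stream function $H$, local $C^1$-diffeomorphism $\Psi$ straightening the level-set foliation, then trivializing the resulting one-parameter family of 1D ODEs through $G(u,h)=\int_{u_*}^u dv/b(v,h)$ -- is the natural line of attack for planar divergence-free, non-vanishing Sobolev fields, and the pointwise computations you carry out are correct as far as they go.

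There is, however, a real gap in the last step. You invoke ``the Sobolev chain rule for compositions with bi-Lipschitz homeomorphisms,'' but $(u,h)\mapsto (G(u,h),h)$ is \emph{not} bi-Lipschitz: as you yourself compute, $\partial_h G$ belongs only to $L^p_{\loc}$ and is in general unbounded. For a generic composition of $W^{1,p}$ maps this is fatal -- writing $D(T_1\circ T_2)=(DT_1\circ T_2)\,DT_2$ and applying H\"older yields only $L^{p/2}$. What actually makes the argument close, and what must be stated and used explicitly, is the triangular $h$-preserving structure of every Jacobian appearing in the rectified chart: each factor is upper triangular with diagonal bounded and bounded away from zero and with $L^p_{\loc}$ off-diagonal, and such matrices compose without loss of integrability because the unbounded entry only ever multiplies bounded ones. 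Concretely, the conjugated flow $V_t(u_0,h)=G^{-1}(G(u_0,h)+t,h)$ satisfies
\[
DV_t \;=\; \begin{pmatrix} b(V_t,h)/b(u_0,h) & \ b(V_t,h)\bigl(\partial_h G(u_0,h)-\partial_h G(V_t,h)\bigr) \\ 0 & 1 \end{pmatrix},
\]
whose off-diagonal entry is in $L^p_{\loc}$ because $u_0\mapsto V_t(u_0,h)$ is bi-Lipschitz with uniform constants at each fixed $h$. Mollifying $b$ and passing to the limit, as you suggest, makes this computation rigorous, but it does not by itself repair the naive chain-rule loss without this structural observation. A second, unaddressed gap is the local-to-global step: the rectification is purely local, and for a fixed compact set and a fixed $t$ the flow typically exits any single chart; one must cover the relevant flow tube by finitely many flow boxes, rectify in each, and compose short-time flows, relying on the group property and again on the triangular structure to keep the composition inside $W^{1,p}$. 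Finally, the stated bound $D\Phi^X_t\in L^\infty_{\loc}(\R,L^p_{\loc})$ requires the estimates on $\partial_h G$ and $\inf|b|$ to be locally uniform in $t$; this is true but is asserted rather than argued.
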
 

We can deduce as a corollary of the results of the previous sections and Theorem \ref{Elio} the following:

\begin{corollary}\label{CORO}
Let $p > 2$, $X \in L^\infty\cap W^{1,p}_{\loc}(\R^2,\R^2)$ with zero divergence and $X\neq 0$ everywhere on $\R^2$. Let also $q \ge 1$, $Y \in L^\infty\cap W_{\loc}^{1,q}(\R^n,\R^n)$ with bounded divergence. If $[X,Y] = 0$ a.e., then $\Phi_t^X\circ\Phi_s^Y = \Phi_s^Y\circ\Phi_t^X$ for every $t,s \in \R$.
\end{corollary}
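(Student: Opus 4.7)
The strategy is to reduce to Proposition \ref{eqcond}: once we know that $\Phi_t^X$ is weakly Lie differentiable in direction $Y$ with derivative in the required Lebesgue class, Proposition \ref{eqcond} (combined with the hypothesis $[X,Y]\equiv 0$) gives $T_t[X,Y]\equiv 0$ for every $t$, and then Proposition \ref{prop:1} yields the commutativity of flows.

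First, I would invoke Marconi's Theorem \ref{Elio} to upgrade $\Phi_t^X$ from a mere Regular Lagrangian Flow to an honest Sobolev map:
\[
\Phi_t^X \in W^{1,p}_{\loc}(\R^2,\R^2), \qquad D\Phi_t^X \in L^\infty_{\loc}(\R, L^p_{\loc}(\R^2,\R^{2\times 2})).
\]
Since $Y \in L^\infty$, the candidate derivative $f(t,z) \doteq D\Phi_t^X(z)\,Y(z)$ then lies in $L^\infty_{\loc}(\R, L^p_{\loc}(\R^2,\R^2))$. Because $p > 2$, the H\"older conjugate satisfies $p' < 2 < p$, and hence $L^p_{\loc} \hookrightarrow L^{p'}_{\loc}$ on bounded open sets, giving $f \in L^\infty_{\loc}(\R, L^{p'}_{\loc}(\R^2,\R^2))$, which is precisely the integrability required to apply Proposition \ref{eqcond}.

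Second, I would verify that this $f$ is indeed the weak Lie derivative of $\Phi_t^X$ in direction $Y$ in the sense of Definition \ref{defn:weakdiff}. For fixed $t$, the map $\Phi_t^X$ is locally bounded (since $|\Phi_t^X(z)-z|\le |t|\|X\|_\infty$) and belongs to $W^{1,p}_{\loc}$, while $Y \in W^{1,q}_{\loc}\cap L^\infty$ has $\dv Y \in L^\infty$. The standard Sobolev product/Leibniz rule then yields
\[
\dv(\Phi_t^X \otimes Y) = D\Phi_t^X\, Y + \dv(Y)\,\Phi_t^X
\]
in the distributional sense on $\R^2$. Testing this identity against $\varphi \in C^1_c(\R^2,\R^2)$ and rearranging reproduces exactly \eqref{dist0} with $f = D\Phi_t^X\,Y$, confirming weak Lie differentiability with the required derivative.

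Combining these two ingredients with $[X,Y] \equiv 0$, Proposition \ref{eqcond} gives $T_t[X,Y]\equiv 0$ for every $t \in \R$, and then Proposition \ref{prop:1} concludes $\Phi_t^X\circ \Phi_s^Y = \Phi_s^Y \circ \Phi_t^X$ for all $s,t\in\R$. The substantive input is Theorem \ref{Elio}; modulo that, the only delicate point is justifying the Sobolev product rule in the second step, which is routine given the explicit $L^\infty$ bounds on $Y$, $\dv Y$, and (locally) $\Phi_t^X$, so that the product $\Phi_t^X\otimes Y$ is genuinely $W^{1,\min(p,q)}_{\loc}$.
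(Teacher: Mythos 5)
Your proof is correct and follows essentially the same route as the paper: invoke Theorem~\ref{Elio} to get $D\Phi_t^X \in L^\infty_{\loc}(\R,L^p_{\loc})$, set $f = D\Phi_t^X Y$, observe $p' < p$ so that $f$ lies in the class required by Proposition~\ref{eqcond}, and then conclude via Propositions~\ref{eqcond} and~\ref{prop:1}. The only difference is that you spell out the verification that $D\Phi_t^X Y$ is indeed the weak Lie derivative of $\Phi_t^X$ in the sense of Definition~\ref{defn:weakdiff} via the Sobolev Leibniz rule for $\dv(\Phi_t^X\otimes Y)$, a step the paper leaves implicit; that is a legitimate and slightly more complete presentation of the same argument.
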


In other words, under some assumptions on $X$, in dimension $2$ we find the classical equivalence \eqref{result}.

\begin{proof}[Proof of Corollary \ref{CORO}]
Apply Theorem \ref{Elio} to find that $\Phi_t^X$ has Sobolev regularity and $$D\Phi_t^X \in L^\infty_{\loc}(\R,L_{\loc}^p(\R^2,\R^2)).$$ This implies that
\[
D\Phi_t^XY \in L^\infty_{\loc}(\R,L_{\loc}^p(\R^2,\R^2)).
\]
Now define $f \doteq D\Phi_t^XY$. Since $p > 2$, $p' < p$, and hence
\[
f \in L^\infty_{\loc}(\R,L_{\loc}^{p'}(\R^2,\R^2)).
\]
Therefore, we see that the hypotheses of Proposition \ref{eqcond} are fulfilled and now Proposition \ref{prop:1} finishes the proof.
\end{proof}

\begin{remark}
Theorem \ref{Elio} actually holds under the assumption of bounded divergence of the field $X$, but it is not stated like this in \cite{Mar20}. With some technical arguments we could actually infer the result of Corollary \ref{CORO} for bounded divergence fields with the results appearing in \cite{Mar20}, but this would greatly increase the technicalities of the proof, and hence we have preferred this cleaner statement.
\end{remark}

\appendix

\section{Technical results}\label{APP}

This appendix contains the proof of Lemma \ref{lemmag}, that we restate here:

\begin{lemma}
Let $f \in L^1(\R^n)$, $a \in L^\infty(\R^n,\R^m)$, $b \in W^{1,p}(\R^n,\R^n)$ with bounded divergence, $p>1$, and suppose that
\[
\dv(a\otimes b) = f
\]
in the distributional sense. Then, for every $g\in C_c^1(\R^m,\R^n)$, we have
\begin{equation}\label{gg}
\dv(g(a)\otimes b) = Dg(a)f - \dv b(Dg(a)a - g(a)),
\end{equation}
in the distributional sense.
\end{lemma}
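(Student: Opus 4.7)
The plan is to reduce to a pointwise identity by mollifying $a$ and then pass to the limit using the DiPerna--Lions commutator estimate. Let $\rho_\eps$ be a standard convolution kernel and set $a_\eps := a \ast \rho_\eps$. Since $a_\eps \in C^\infty \cap L^\infty$, a direct application of the product rule and the chain rule gives the pointwise identity
\[
\dv(g(a_\eps) \otimes b) = Dg(a_\eps)\,\dv(a_\eps \otimes b) - \dv b\,\bigl(Dg(a_\eps) a_\eps - g(a_\eps)\bigr),
\]
valid almost everywhere, which is just \eqref{gg} for the smooth field $a_\eps$. This uses $\dv(a_\eps \otimes b) = (b\cdot \nabla) a_\eps + a_\eps \dv b \in L^\infty$, and the fact that $b \in W^{1,p}\cap L^\infty$ with $\dv b \in L^\infty$.

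Next I would write $\dv(a_\eps \otimes b) = (\dv(a\otimes b))\ast \rho_\eps + r_\eps = f\ast \rho_\eps + r_\eps$, where
\[
r_\eps := \bigl[(b\cdot \nabla) a_\eps - ((b\cdot \nabla) a)\ast \rho_\eps\bigr] + \bigl[a_\eps \dv b - (a\,\dv b)\ast \rho_\eps\bigr].
\]
The first bracket is precisely the DiPerna--Lions commutator for $a \in L^\infty$ and $b \in W^{1,p}$ with $p>1$, which tends to $0$ in $L^1_{\loc}$; the second bracket tends to $0$ in $L^1_{\loc}$ by standard convolution properties since $a \in L^\infty$ and $\dv b \in L^\infty$. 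This is the \emph{key step} and the only nontrivial input: the commutator estimate is what permits renormalization in the Sobolev setting.

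Substituting the decomposition of $\dv(a_\eps \otimes b)$ into the pointwise identity yields
\[
\dv(g(a_\eps) \otimes b) = Dg(a_\eps)(f\ast \rho_\eps) + Dg(a_\eps) r_\eps - \dv b\,\bigl(Dg(a_\eps) a_\eps - g(a_\eps)\bigr).
\]
I now let $\eps \to 0$ term by term in the distributional sense, working on an arbitrary compact set. Since $g\in C_c^1$, both $g$ and $Dg$ are bounded and continuous; by Lebesgue differentiation $a_\eps \to a$ almost everywhere, so dominated convergence gives $g(a_\eps)\to g(a)$ and $Dg(a_\eps)\to Dg(a)$ in $L^q_{\loc}$ for all $q<\infty$. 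Combined with $f\ast \rho_\eps \to f$ in $L^1_{\loc}$, the uniform bound $\|Dg(a_\eps)\|_\infty \le \|Dg\|_\infty$ together with $r_\eps \to 0$ in $L^1_{\loc}$, and $\dv b \in L^\infty$, one obtains $\dv(g(a_\eps)\otimes b) \to \dv(g(a)\otimes b)$ as distributions and the right--hand side converges in $L^1_{\loc}$ to $Dg(a) f - \dv b\,(Dg(a) a - g(a))$, which yields \eqref{gg}. The only real obstacle is the commutator estimate for $r_\eps$; everything else is dominated convergence bookkeeping.
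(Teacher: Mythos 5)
Your proof is correct and follows essentially the same route as the paper's: mollify $a$, obtain the pointwise chain-rule identity for $g(a_\eps)$, isolate a commutator-type error term, invoke the DiPerna--Lions commutator estimate, and pass to the limit by dominated convergence. The only cosmetic difference is that you decompose the error $r_\eps$ into a gradient-commutator bracket $(b\cdot\nabla)a_\eps - ((b\cdot\nabla)a)\ast\rho_\eps$ and a separate divergence bracket, whereas the paper identifies the whole quantity $v_\eps=\dv(a_\eps\otimes b)-(\dv(a\otimes b))\ast\rho_\eps$ componentwise with the full commutator $R_\eps[a^\ell,b]$ from \eqref{rcomm}; both are standard forms of the same estimate and either reference suffices.
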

The proof is an easy consequence of the so-called \emph{commutator estimate} of DiPerna-Lions, that asserts that
\begin{equation}\label{rcomm}
R_\eps[u,b] \doteq \dv(u_\eps b)- \dv((ub)_\eps) = \sum_{j = 1}^n u\star \partial_j\rho_\eps b_j + \sum_{j = 1}^nu\star \rho_\eps\partial_jb_j - \sum_{j = 1}^n u b_j\partial_j\rho_\eps
\end{equation}
converges strongly to $0$ in $L^1_{\loc}$, under the assumption that $u \in L^\infty(\R^n)$, $b \in W^{1,p}(\R^n,\R^n)$, $p \ge 1$, with bounded divergence, and $u_\eps = u\star \rho_\eps$ is any mollification of $u$ through even mollification kernels $\rho_\eps$. For a proof, see for instance \cite[Lemma 2.2]{CFL}.
\begin{proof}
Let us denote with $a_\eps \doteq a\star \rho_\eps$, the mollification of $a$. Since $b \in W^{1,p}$, we can compute pointwise a.e.:
\begin{equation}\label{dveps}
\dv(g(a_\eps)\otimes b) = Dg(a_\eps)Da_\eps b + \dv(b)a_\eps.
\end{equation}
We rewrite
\[
Da_\eps b = \dv(a_\eps \otimes b) - \dv(b)a_\eps,
\]
and hence \eqref{dveps} becomes:
\[
\dv(g(a_\eps)\otimes b) = Dg(a_\eps)\dv(a_\eps \otimes b) - \dv(b)Dg(a_\eps)a_\eps + \dv(b)a_\eps.
\]
Finally, adding and subtracting $Dg(a_\eps)\dv(a\otimes b)_\eps$, we have
\[
\dv(g(a_\eps)\otimes b) = Dg(a_\eps)(\dv(a_\eps \otimes b)-\dv(a\otimes b)_\eps) + Dg(a_\eps)\dv(a\otimes b)_\eps - \dv(b)Dg(a_\eps)a_\eps + \dv(b)a_\eps.
\]
Since $\dv(a\otimes b) = f \in L^1$, it is immediate to see that $Dg(a_\eps)\dv(a\otimes b)_\eps$ strongly converges to $Dg(a)\dv(a\otimes b)$. The same strong convergence holds for $- \dv(b)Dg(a_\eps)a_\eps + \dv(b)a_\eps$ towards $- \dv (b)(Dg(a)a - g(a))$. In order to show \eqref{gg}, since $g \in C^1_c$ we only need to show the strong $L^1_{\loc}$ convergence to $0$ of the term
\[
v_\eps \doteq\dv(a_\eps \otimes b)-\dv(a\otimes b)_\eps 
.
\]
To conclude, we only need to observe that the $\ell$-th component, $\ell \in \{1,\dots, m\}$, of the vector field
$v_\eps$
is given by
$v_\eps^\ell = R_\eps [a^\ell, b]$, 
 where $R_\eps$ was introduced in \eqref{rcomm}. As said, $R_\eps \to 0$ strongly in $L_{\loc}^1$, and the proof is finished.
\end{proof}


\bigskip
\textbf{ Acknowledgements}. 
The authors have been supported by the SNF Grant 182565. They also wish to thank Nicola Gigli and Camillo De Lellis for bringing this problem to their attention, and  Elio Marconi for many interesting discussions. 

\bibliographystyle{plain}
\bibliography{Commutativity}

\end{document}